\newtheorem{theorem}{Theorem}[section]
\newtheorem{lemma}[theorem]{Lemma}
\newtheorem{proposition}[theorem]{Proposition}
\newtheorem{corollary}[theorem]{Corollary}
\newtheorem{remark}[theorem]{Remark}
\newtheorem{algorithm}[theorem]{Algorithm}
\DeclareMathOperator{\diam}{diam}
\DeclareMathOperator{\Deg}{Deg}
\newcommand{\spb}[1]{\smallskip}
\newcommand{\mpb}[1]{\medskip}
\newcommand{\bpb}[1]{\bigskip}
\renewcommand{\d}{\delta}
\newcommand{\D}{\Delta}
\journal{}
\begin{document}

\begin{frontmatter}

\title{Alliance polynomial of regular graphs}

\author[a]{Walter Carballosa}
\address[a]{National council of science and technology (CONACYT) $\&$ Autonomous University of Zacatecas,
Paseo la Bufa, int. Calzada Solidaridad, 98060 Zacatecas, ZAC, Mexico}
\ead{waltercarb@gmail.com} 

\author[b]{Jos\'e M. Rodr{\'\i}guez}
\address[b]{Department of Mathematics, Universidad Carlos III de Madrid, Av. de la Universidad 30, 28911 Legan\'es, Madrid, Spain}
\ead{jomaro@math.uc3m.es}

\author[d]{Jos\'e M. Sigarreta}
\address[d]{Faculty of Mathematics, Autonomous University of Guerrero, Carlos E. Adame 5, Col. La Garita, Acapulco, Guerrero, Mexico}
\ead{jsmathguerrero@gmail.com}

\author[e]{Yadira Torres-Nu\~nez}
\address[e]{Humboldt International University, 4000 West Flagler Street, 33134, Miami, Fl., USA}
\ead{yadiratn83@gmail.com}

\begin{abstract}
The alliance polynomial of a graph $G$ with order $n$ and maximum degree $\D$ is the polynomial $A(G; x) = \sum_{k=-\D}^{\D} A_{k}(G) \, x^{n+k}$, where $A_{k}(G)$ is the number of exact defensive $k$-alliances in $G$. We obtain some properties of $A(G; x)$ and its coefficients for regular graphs. In particular, we characterize the degree of regular graphs by the number of non-zero coefficients of their alliance polynomial.
Besides, we prove that the family of alliance polynomials of $\D$-regular graphs with small degree is a very special one, since it does not contain alliance polynomials of graphs which are not $\D$-regular.
By using this last result and
direct computation we find that the alliance polynomial determines uniquely each cubic graph of order less than or equal to $10$.
\end{abstract}

\begin{keyword}
Regular Graphs \sep Cubic Graphs \sep Defensive Alliances \sep Alliance Polynomials

\MSC[2010] 05C69 \sep 11B83
\end{keyword}

\end{frontmatter}

%
%
%
%
%
%
%
%
%
%
%
%
%
%
%

\section{Introduction.}
Graph polynomials have been widely study since George D. Birkhoff introduced the chromatic polynomial (1912) in an attempt to prove the four color theorem (see \cite{Bi}).
Although the original motivation for the study of this invariant (chromatic number) is still important, much of the current interest, for example, in the Tutte polynomial is not related to any of its applications.
In particular, graph polynomials are considered interesting when they encode much information about the underlying graph.
Some parameters of a graph $G$ allow one to define polynomials on the graph $G$, for instance,
the parameters associated to matching sets \cite{F,GG}, independent sets \cite{BDN,GH,HL}, domination sets \cite{AAP,AP}, chromatic numbers \cite{Bi,R,T} and many others.
In recent years there has been an increase in the number of
papers introducing new properties of graph polynomials \cite{KM,MR,MRB}.

The study of defensive alliances in graphs, together with a variety of other kinds of alliances, was introduced in \cite{KHH}. In the cited paper authors initiated the study of the mathematical properties of alliances. In particular, several bounds on the defensive alliance number were given. The particular case of global defensive alliance (generalization of dominating sets) was investigated in \cite{FLH3,HHH,SRV2} where several bounds on the global defensive alliance number were obtained. Several tight bounds on different types of alliance numbers of a graph were obtained in \cite{KHH,SRV2,S,SGBR}, namely, (global) defensive alliance number, (global) offensive alliance number and (global) dual alliance number. Moreover, alliances, as a graph-theoretic concept, have recently attracted a great deal of attention due to some interesting applications in a variety of areas, including quantitative analysis of secondary RNA structures \cite{HKSZ} and national defense \cite{P}.
Besides, defensive alliances could work as a mathematical model of web communities. Adopting the
definition of a Web community proposed recently in \cite{FLG}, “a Web community is a set of web pages
having more hyperlinks (in either direction) to members of the set than to non-members”.
In \cite{CHRT}, the authors use the cardinality of the strong defensive alliances by define the strong alliance polynomial of a graph.
The exact defensive alliance was defined in \cite{C}.
The alliance polynomial was introduced in \cite{To} and have been studied in \cite{CRST,CRSTx}.

Throughout this paper, $G=(V,E)$ denotes a simple graph (not necessarily connected) of order $|V|=n$ and size $|E|=m$.
We denote two adjacent vertices $u$ and $v$ by
$u\sim v$. For a nonempty set $X\subseteq V$, and a vertex $v\in V$,
 $N_X(v)$ denotes the set of neighbors  $v$ has in $X$:
$N_X(v):=\{u\in X: u\sim v\},$ and the degree of $v$ in $ X$ will be
denoted by $\delta_{X}(v)=|N_{X}(v)|$. We denote by $\d$ and $\D$ the minimum and maximum degree of $G$, respectively.
If $G$ is a regular graph with degree $\D$ we say that $G$ is $\D$-regular.
The subgraph induced by
$S\subset V$ will be denoted by  $\langle S\rangle $ and the
complement of the set $S\in V$ will be denoted by $\overline{S}$.

A nonempty set $S\subseteq V$ with $\langle S\rangle$ connected is a \emph{defensive  $k$-alliance} in
$G=(V,E)$,  $k\in [-\Delta,\Delta]\cap \mathbb{Z}$, if for
every $ v\in S$,
\begin{equation}\label{cond-A-Defensiva} \delta _S(v)\ge \delta_{\overline{S}}(v)+k.\end{equation}
A vertex $v\in S$ is said to be $k$-\emph{satisfied} by the set $S$,
if \eqref{cond-A-Defensiva} holds.

We consider the value of $k$ in the set of integers $\mathcal{K}:= [-\Delta,\Delta]\cap \mathbb{Z}$.
It may be that there are some values of $k \in\mathcal K$ such that there do not exist. For instance, for $k\ge 2$ there are no defensive $k$-alliances in the star graph $S_n$ with $n$ vertices.
Besides, if $G$ is connected then $V(G)$ is a defensive $\d$-alliance.
Notice that for any $S$ with $\langle S\rangle$ connected there exists some $k\in\mathcal{K}$ such that it is a defensive $k$-alliance in $G$.

We define, for $S\subseteq V(G)$ inducing a connected subgraph $\langle S\rangle$,
\begin{equation}\label{eq:k_exact}
    k_{S}:=\displaystyle\max_{} \{k\in\mathcal{K} \,:\,  S \text{ is a defensive $k$-alliance}\}.
\end{equation}
We say that  $k_{S}$ is the \emph{exact index of alliance of} $S$, or also, $S$ is an \emph{exact defensive $k_{S}$-alliance} in $G$.
The exact index of alliance of $S$ in $G$ is also $k_{S}= \displaystyle\min_{v\in S} \{\d_{S}(v) - \d_{\overline{S}}(v)\}$.

The \emph{alliance polynomial} of a graph $G$ is defined as an ordinary generating function for the number of exact defensive $k$-alliances:
\begin{equation}\label{eq:Poly1}
    A(G;x)= \displaystyle\sum_{k\in \mathcal{K}} A_{k}(G) x^{n+k}, \ \text{ with } A_{k}(G) \text{ the number of exact defensive } k \text{-alliances in } G.
\end{equation}

There are several d.p.-equivalent definitions for an alliance polynomial. For instance, another natural definition for alliance polynomials of $G$ could be $A^\star(G;x)= \sum_{k\in \mathcal{K}} A_{k}(G) x^{k}$ (\emph{i.e.}, $A(G;x)= x^n\, A^\star(G;x)$). We may also define it by $A^\dag(G;x)=\sum_{k\in \mathcal{K}} \big(A_k(G)+1\big) x^k$. Note that, $A^\star(G;x)$ and $A^\dag(G;x)$ are Laurent polynomials, and that the polynomial $A^\dag(G;x)$ does not satisfy any easy variant of Corollary \ref{c:CubUnim}. 
Hence it is convenient to keep the previous choice, although each result in this paper has an analogous for $A^\star(G;x)$.

The following procedure allows to compute the alliance polynomial of a graph $G$ with order $n$, see \cite[Algorithm 2.1]{CRST}. This will be used in Section \ref{sect3} in order to compute alliance polynomials.
Let us consider $W=\{S_1,\dots,S_{2^n-1}\}$ the collection of nonempty subsets of $V$.
\begin{algorithm}\label{algorithm}
${}$\newline
Input: adjacency matrix of $G$.
\newline
Output: alliance polynomial of $G$.
\smallskip\newline
The algorithm starts with $A(G;x)=0$ and continues with the following steps, for $1 \le j \le 2^n-1$.
\begin{enumerate}
  \item {If $\langle S_j\rangle$ is a connected subgraph, then go to step (2), else replace $j$ by $j+1$ and apply this step again.}
  \item {Compute $k_{S_j}$, and add one term $x^{n+k_{S_j}}$ to $A(G;x)$.}
  \item {If $j < 2^n-1$, then replace $j$ by $j+1$ and apply step (1) again. If $j = 2^n-1$, then the algorithm stops.}
\end{enumerate}
\end{algorithm}

The main aim of this work is to obtain further results about the alliance polynomial of regular graphs (graphs with all vertices with the same degree), since they are a very interesting class of graphs with many applications (see, e.g., \cite{B,Cu,DM,RST}).
In this paper we study the alliance polynomials of regular graphs and their coefficients, see Section \ref{sect:reg}.
In Section \ref{sect:con_reg} we focus on the alliance polynomials of connected regular graphs; besides, we prove that the family of alliance polynomials of connected $\D$-regular graphs with small degree is a very special one, since it does not contain alliance polynomials of graphs which are not connected $\D$-regular (see Theorems \ref{t:Reg0-3} and \ref{t:Reg4-6}).
Finally, by using Theorem \ref{t:Reg0-3} and
direct computation we find that the alliance polynomial determines uniquely each cubic graph of order less than or equal to $10$.

\section{Alliance polynomials for regular graphs}\label{sect:reg}
In this section we deal with regular graphs, in particular, we obtain some properties of the alliance polynomial of regular graph and its coefficients.
Below, a quick reminder of some previous results for general graphs (not necessarily regular) which will be useful, see \cite{CRST}. We denote by $\Deg(p)$ the degree of the polynomial $p$.

\begin{theorem}\label{p:AlliPoly}
   Let $G$ be any graph. Then $A(G;x)$ satisfies the following properties:

   \begin{enumerate}[i)]
     \item {$A(G;x)$ does not have zeros in the interval $(0,\infty)$.}

     \item {$A(G;1) < 2^{n}$, and it is the number of connected induced subgraphs in $G$.}

     \item {If $G$ has at least an edge and its degree sequence has exactly $r$ different values, then $A(G;x)$ has at least $r + 1$ terms.}

     \item { $A(G;x)$ is a symmetric polynomial (i.e., an even or an odd function of $x$) if and only if the degree sequence of $G$ has either all values odd or all even.}

     \item {$A_{-\D}(G)$ and $A_{-\D + 1}(G)$ are the number of vertices in $G$ with degree $\D$ and $\D-1$, respectively.}

     \item {$A_{\D}(G)$ is equal to the number of connected components in $G$ which are $\D$-regular.}


     \item {$n + \d \leq \Deg(A(G;x)) \leq n + \D$.}
   \end{enumerate}
\end{theorem}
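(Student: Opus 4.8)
The plan is to reduce every statement to a single algebraic reformulation of the exact index and then read off each property. Writing $\delta_{\overline{S}}(v)=\deg(v)-\delta_S(v)$, I would first record the identity $\delta_S(v)-\delta_{\overline{S}}(v)=2\delta_S(v)-\deg(v)$, so that
\[
k_S=\min_{v\in S}\bigl(2\delta_S(v)-\deg(v)\bigr).
\]
Two consequences are immediate and will be used repeatedly: for a singleton $S=\{v\}$ one has $\delta_S(v)=0$ and hence $k_{\{v\}}=-\deg(v)$, while for any $S$ with $|S|\ge 2$ connectivity of $\langle S\rangle$ forces $\delta_S(v)\ge 1$ for every $v\in S$, so $2-\deg(v)\le 2\delta_S(v)-\deg(v)\le \deg(v)$. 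I would also note the bijective bookkeeping behind the polynomial: every nonempty set inducing a connected subgraph is an exact defensive $k_S$-alliance for exactly one integer $k_S\in\mathcal K$, so $A(G;x)$ has nonnegative integer coefficients and $A(G;1)$ counts precisely the nonempty connected induced subgraphs.

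With this in hand, parts i) and ii) are essentially free. Since all coefficients $A_k(G)$ are nonnegative and at least one is positive (any singleton contributes), $A(G;x)>0$ for every $x>0$, giving i). For ii), $A(G;1)=\sum_k A_k(G)$ equals the number of nonempty connected induced subgraphs, which is at most the number $2^n-1$ of nonempty subsets, hence strictly below $2^n$.

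The degree-related parts v), vi) and vii) follow from the bounds above. For the two smallest exponents, if $|S|\ge 2$ then $k_S\ge 2-\Delta$, so the values $-\Delta$ and $1-\Delta$ can only be attained by singletons; solving $-\deg(v)=-\Delta$ and $-\deg(v)=1-\Delta$ then shows that $A_{-\Delta}(G)$ and $A_{-\Delta+1}(G)$ count the vertices of degree $\Delta$ and $\Delta-1$, proving v). For vi), $k_S=\Delta$ forces $2\delta_S(v)-\deg(v)=\Delta$ for every $v\in S$; combined with $\delta_S(v)\le\deg(v)\le\Delta$ this gives $\delta_S(v)=\deg(v)=\Delta$ for all $v\in S$, i.e.\ $S$ contains all neighbours of each of its vertices and is therefore a single $\Delta$-regular component, and conversely each such component is an exact $\Delta$-alliance. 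For vii), $\Deg(A(G;x))=n+\max_S k_S$; the upper bound $\max_S k_S\le\Delta$ is the estimate $2\delta_S(v)-\deg(v)\le\deg(v)\le\Delta$, while the lower bound comes from choosing $S$ to be the whole connected component of a vertex of minimum degree $\delta$, for which $\delta_S(v)=\deg(v)$ and hence $k_S=\min_{u\in S}\deg(u)=\delta$.

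The two parts that require the most care are iv) and iii). For iv) I would use that $2\delta_S(v)-\deg(v)\equiv\deg(v)\pmod 2$; thus if all degrees share a common parity $p$, then $k_S\equiv p\pmod 2$ for every $S$, so all exponents $n+k_S$ have the same parity and $A(G;x)$ is even or odd, while conversely the singleton exponents $n-\deg(v)$ force all degrees to have equal parity. For iii), the $r$ distinct degree values produce $r$ distinct singleton exponents $n-\deg(v)\le n$; the extra term is obtained from a connected component $C$ with at least one edge (which exists since $G$ has an edge), for which $k_C=\min_{u\in C}\deg(u)\ge 1$ yields an exponent strictly above $n$ and hence distinct from all singleton exponents. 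The main obstacle is precisely this last argument: one must verify that the ``$+1$'' term genuinely lies outside the set of singleton exponents, and in vi) that the exact $\Delta$-alliances are \emph{exactly} the regular components rather than merely containing them; both hinge on translating the extremal values of $k_S$ back into rigid structural constraints on $S$.
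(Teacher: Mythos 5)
The paper does not actually prove this theorem: it is imported verbatim as a ``reminder of previous results'' from reference [CRST], so there is no in-paper proof to compare your argument against. Judged on its own, your proof is correct and self-contained. The reduction to the single identity $k_S=\min_{v\in S}\bigl(2\delta_S(v)-\deg(v)\bigr)$ (equivalently the paper's own formula $k_S=\min_{v\in S}\{\delta_S(v)-\delta_{\overline{S}}(v)\}$ combined with $\delta_{\overline{S}}(v)=\deg(v)-\delta_S(v)$) is exactly the right organizing device: it makes i) and ii) immediate from nonnegativity of the coefficients and the one-term-per-connected-set bookkeeping; it gives v) via the observation that sets of size at least $2$ have $k_S\ge 2-\Delta$, so indices $-\Delta$ and $1-\Delta$ are attained only by singletons; it gives vi) because $k_S=\Delta$ forces $\delta_S(v)=\deg(v)=\Delta$ for all $v\in S$, so $S$ is neighborhood-closed and connected, hence a $\Delta$-regular component; it gives vii) with the component of a minimum-degree vertex realizing $k_S=\delta$; and the parity congruence $2\delta_S(v)-\deg(v)\equiv\deg(v)\pmod 2$ together with the singleton exponents $n-\deg(v)$ settles both directions of iv). Your handling of the two delicate points is also sound: in iii) the component containing an edge has exact index $\min_{u\in C}\deg(u)\ge 1$, so its exponent exceeds $n$ and cannot collide with any of the $r$ distinct singleton exponents (all at most $n$, with equality only for isolated vertices), and in vi) you prove the exact $\Delta$-alliances \emph{are} the regular components, not merely that such components contribute. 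No gaps.
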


As usual, by \emph{cycle} we mean a simple closed curve, i.e., a path with different vertices,
unless the last one, which is equal to the first vertex.
The following lemma is a well known result of graph theory.

\begin{lemma}\label{l:degreem}
If $r\ge 2$ is a natural number and $G$ is any graph with $\d(v) \ge r$ for every
$v \in V(G)$, then there exists a cycle $\eta$ in $G$ with $L(\eta) \ge r+1$.
\end{lemma}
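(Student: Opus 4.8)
The plan is to use the classical longest-path argument. First I would let $P = v_0 v_1 \cdots v_\ell$ be a path of maximum length in $G$; such a path exists because $G$ is finite. The crucial observation is that every neighbor of the endpoint $v_0$ must already lie on $P$: if $v_0$ were adjacent to some vertex $w \notin \{v_0,\dots,v_\ell\}$, then $w\, v_0 v_1 \cdots v_\ell$ would be a path strictly longer than $P$, contradicting the maximality of $P$.

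Next I would exploit the degree hypothesis. Since $\d(v_0) \ge r$, the vertex $v_0$ has at least $r$ neighbors, and by the previous step all of them lie among $v_1,\dots,v_\ell$. Let $j$ be the largest index for which $v_0 \sim v_j$. Because $v_0$ has at least $r$ neighbors occupying distinct indices in $\{1,\dots,\ell\}$, the largest of these indices must satisfy $j \ge r$.

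Finally, I would close up the curve: consider $\eta := v_0 v_1 \cdots v_j v_0$. Its intermediate vertices are distinct, being vertices of the path $P$, and $v_0 v_j$ is an edge by the choice of $j$, so $\eta$ is a genuine cycle. Its length is $L(\eta) = j + 1 \ge r + 1$, as claimed; the assumption $r \ge 2$ guarantees $L(\eta) \ge 3$, so $\eta$ is not degenerate.

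I do not expect a serious obstacle here, as this is a standard fact. The only points requiring care are the two verifications just flagged: that maximality of $P$ forces all of $v_0$'s neighbors onto $P$, and that the at-least-$r$ neighbors of $v_0$ force the top index $j$ to be at least $r$, which together yield both that $\eta$ is a true cycle and that its length meets the required bound.
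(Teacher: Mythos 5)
Your proof is correct: the longest-path argument, the observation that all neighbors of $v_0$ lie on $P$, the bound $j\ge r$ on the furthest neighbor, and the resulting cycle of length $j+1\ge r+1$ (nondegenerate since $r\ge 2$) are all sound. The paper states this lemma without proof, as a well-known fact of graph theory, and your argument is exactly the standard classical proof that is being invoked.
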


We show now some results about the alliance polynomial of regular graphs and their coefficients.
If $G$ is a graph and $v\in V(G)$, we denote by $G \setminus \{v\}$ the subgraph obtained by removing from $G$ the vertex $v$ and the edges incident to $v$.
We say that $v$ is a \emph{cut vertex} if
$G \setminus \{v\}$ has more connected components than $G$.
Besides, if $p$ is a polynomial we denote by $\Deg_{min}(p)$ the minimum degree of their non-zero coefficients.

\begin{theorem}\label{t:PropRegular}
   For any $\D$-regular graph $G$, its alliance polynomial $A(G;x)$ satisfies the following properties:

   \begin{enumerate}[i)]
     \item {$A_{-\D+2i}(G)$ is the number of connected induced subgraphs of $G$ with minimum degree $i$ $(0\le i\le\D)$.}

     \item {$\Deg_{\min}\big(A(G;x)\big)= n - \D$ and $A_{-\D}(G) = n$.}

     \item {$\Deg\big(A(G;x)\big) = n + \D$. Furthermore,

         \begin{equation}\label{eq:RegOrderV}
         n = \frac{\Deg_{\min}\big(A(G;x)\big) + \Deg\big(A(G;x)\big)}2
         \end{equation}
         and
         \[
         m = A_{-\D}(G) \frac{\Deg\big(A(G;x)\big) - \Deg_{\min}\big(A(G;x)\big)}4 = \frac{\Deg^2\big(A(G;x)\big) - \Deg_{\min}^2\big(A(G;x)\big)}8.
         \]
         }

     \item {$1\le A_{\D}(G) \le n/(\D+1)$. Furthermore, $G$ is a connected graph if and only if $A_{\D}(G) = 1$.}

     \item {If $\D>0$, then $A_{-\D+2}(G) \ge m$ and $A_{\D-2}(G) \ge n+n_0$ with $n_0$ the number of cut vertices; in particular, $A_{\D-2}(G)\ge n$.}

     \item {$A(G;x)$ is either an even or an odd function of $x$. Furthermore, $A(G;x)$ is an even function of $x$ if and only if $n + \D$ is even.}

     \item {The unique real zero of $A(G;x)$ is $x=0$, and its multiplicity is $n-\D$.}
   \end{enumerate}
\end{theorem}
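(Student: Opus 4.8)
The plan is to reduce everything to a single structural identity. In a $\D$-regular graph every vertex $v$ satisfies $\delta_S(v)+\delta_{\overline S}(v)=\D$, so for any $S$ with $\langle S\rangle$ connected the exact index becomes
\begin{equation*}
k_S=\min_{v\in S}\big(\delta_S(v)-\delta_{\overline S}(v)\big)=\min_{v\in S}\big(2\delta_S(v)-\D\big)=2\mu(S)-\D,
\end{equation*}
where $\mu(S)$ denotes the minimum degree of $\langle S\rangle$. First I would record this identity; it instantly gives (i), since $S$ is an exact defensive $(-\D+2i)$-alliance exactly when $\mu(S)=i$, and it also shows that only the exponents $n-\D+2i$ $(0\le i\le\D)$ can occur, which is the backbone of (vi) and (vii).

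For (ii) and (iii) I would identify the extreme subgraphs. A connected $\langle S\rangle$ has $\mu(S)=0$ iff $S$ is a single vertex (a connected graph on $\ge 2$ vertices has no isolated vertex), so $A_{-\D}(G)=n$ and $\Deg_{\min}\big(A(G;x)\big)=n-\D$. At the other end, $\mu(S)=\D$ forces every $v\in S$ to have all its $\D$ neighbours inside $S$, i.e. $S$ is a union of components; connectedness makes $S$ a single component, so such $S$ exist and $\Deg\big(A(G;x)\big)=n+\D$. The two displayed formulas in (iii) are then immediate from $\Deg_{\min}=n-\D$, $\Deg=n+\D$, $A_{-\D}(G)=n$ and the handshaking identity $m=n\D/2$. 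The same description of the $\mu(S)=\D$ subgraphs proves (iv): $A_\D(G)$ counts the connected components of $G$, whence $A_\D(G)\ge 1$ with equality iff $G$ is connected, while each component is $\D$-regular and so has at least $\D+1$ vertices, giving $A_\D(G)\le n/(\D+1)$.

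The main work is (v). Since $A_{-\D+2}(G)$ counts connected induced subgraphs with $\mu(S)=1$, the $m$ single edges already give $A_{-\D+2}(G)\ge m$. For the bound $A_{\D-2}(G)\ge n+n_0$ I would produce many subgraphs with $\mu(S)=\D-1$ by deleting one vertex at a time: for each $v$, removing $v$ from its component $C_v$ lowers to $\D-1$ exactly the degrees of the (nonempty, since $\D>0$) set of neighbours of $v$ and leaves all other degrees at $\D$, so every connected component $P$ of $\langle V(C_v)\setminus\{v\}\rangle$ satisfies $\mu(P)=\D-1$. A non-cut vertex yields one such $P$ and a cut vertex yields at least two, producing at least $(n-n_0)+2n_0=n+n_0$ subgraphs in total. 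The delicate point, and the step I expect to be the real obstacle, is checking that all these subgraphs are distinct: if a set $P$ were a component of both $\langle V(C)\setminus\{v\}\rangle$ and $\langle V(C)\setminus\{w\}\rangle$ with $v\neq w$, then the only vertex of $C$ outside $P$ adjacent to $P$ would have to be simultaneously $v$ and $w$, a contradiction. This injectivity argument is what makes the counting rigorous.

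Finally, (vi) and (vii) are harvested from the parity observation. All nonzero terms have exponent $n-\D+2i$, of constant parity $n-\D\equiv n+\D \pmod 2$, so $A(G;x)$ is even or odd, and even precisely when $n+\D$ is even. For (vii), this symmetry together with Theorem \ref{p:AlliPoly}(i) (no zeros in $(0,\infty)$) rules out every nonzero real zero, since a symmetric polynomial with a negative zero would also have a positive one; and because the lowest-degree coefficient $A_{-\D}(G)=n$ is nonzero, $x=0$ is a zero of multiplicity exactly $\Deg_{\min}\big(A(G;x)\big)=n-\D$.
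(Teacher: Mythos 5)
Your proof is correct and follows essentially the same route as the paper: the identity $k_S = 2\mu(S) - \D$ (which is the content of the paper's proof of item i)), single vertices and whole connected components as the extremal alliances for ii)--iv), edges and one-vertex deletions (distinguishing cut from non-cut vertices) for v), and the same parity and positive-coefficient arguments for vi) and vii). The only notable difference is that you explicitly verify that the vertex-deleted subgraphs in v) are pairwise distinct, a point the paper's proof leaves implicit.
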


\begin{proof}
We prove each item separately.
\begin{enumerate}[i)]
  \item {Let us consider $S\subset V$ with $S$ an exact defensive ($2i-\D$)-alliance in $G$. Then, we have for all $v\in S$
      $$2\d_S(v)\ge \d(v)+2i-\D = \D+2i-\D \quad \Leftrightarrow \quad \d_S(v)\ge i,$$
      besides, the equality holds at some $w\in S$.
      We have the result since $A_{-\D+2i}(G)$ is the number of exact defensive ($2i-\D$)-alliance in $G$.}

  \medskip

  \item {One can check directly that if $S$ is a single vertex, then $S$ is an exact defensive ($-\D$)-alliance; furthermore, it is clear that any $S\subseteq V$ with $\langle S\rangle$ connected and more than one vertex is not an exact defensive ($-\D$)-alliance, since for any $v\in S$ we have
      \begin{equation}\label{eq:i}
      \d_{S}(v) - \d_{\overline{S}}(v) \geq  1 - (\D - 1) = -\D + 2.
      \end{equation}
      Consequently $A_{-\D}(G) = n$, since $G$ is a $\D$-regular graph.}

  \medskip

  \item {The maximum value in $\mathcal{K}$ is $\D$, so $Deg\big(A(G;x)\big)$ is at most $n+\D$. We have that each connected component of $G$ is an exact defensive $\D$-alliance since $\d(v)=\D$ for any vertex $v$. Then, $A_{\D}(G)>0$ and $Deg\big(A(G;x)\big)= n+\D$. Besides, the other results are consequences of the well known fact $2m=n\D$ and the previous results.}

  \medskip

  \item{By item i), $A_{\D}(G)$ is the number of connected induced subgraphs of $G$ with minimum degree $\D$; hence, $A_{\D}(G)$ is the number of connected components of $G$. Besides, since any connected component has cardinality greater than $\D$, we obtain the upper bound of $A_{\D}(G)$.}

  \medskip

  \item {If $u,v\in V$ with $u\sim v$, then $S:=\{u,v\}$ is an exact defensive ($2-\D$)-alliance since $1=\d_S(u)=\d_{\overline{S}}(u)+2-\D$ and $1=\d_S(v)=\d_{\overline{S}}(v)+2-\D$. Thus, we obtain $A_{-\D+2}(G)\ge m$.
      Note that if $\D=1$, we have the second inequality. Assume that $\D\ge2$.
      Without loss of generality we can assume that $G$ is connected; otherwise, it suffices to analyze each connected component of $G$.
      Let us define $S_v:=V\setminus\{v\}$ for any $v\in V$.
      Since $\d_{S_v}(u)\ge \D-1$, $\d_{\overline{S_v}}(u)\le1$ for every $u\in S_v$ and both equalities hold for every $w\in N(v)$, we have that $S_v$ is an exact defensive ($\D-2$)-alliance if $v$ is a non-cut vertex, or contains at least two exact defensive ($\D-2$)-alliances if $v$ is a cut vertex.}

  \medskip

  \item {The first statement is a consequence of Theorem \ref{p:AlliPoly} iv). Consider an exact defensive $k$-alliance $S$ in $G$.
      So, there exists $v\in S$ with
      \[
      2\d_{S}(v) = \d(v) + k = \D+k.
      \]
      Then, $\D \equiv k \ (\text{mod } 2)$, $n + k \equiv n + \D \ (\text{mod } 2)$ and we have the result.}

  \medskip

  \item {Since $Deg_{\min}\big(A(G;x)\big)=n-\D$, we have that $x=0$ is a zero of $A(G;x)$ with multiplicity $n-\D$. The positivity of all coefficients of $A(G;x)$ gives $A(G;x)\neq 0$ for every $x>0$. Finally, by item vi), $A(G;x)=(-1)^{n+\D}A(G;-x)\neq0$ for every $x<0$.}
\end{enumerate}
\end{proof}

Note that Theorem \ref{t:PropRegular} (items ii, iv, v and vi) has the following direct consequence.

\begin{corollary}\label{c:Cubic}
  Let $G$ be any cubic graph. Then, $$A(G;x)=A_{-3}(G)\, x^{n-3}+A_{-1}(G)\, x^{n-1}+A_1(G)\, x^{n+1}+A_{3}(G)\, x^{n+3},$$
  with $A_{-3}(G)=n < m\le A_{-1}(G)$ and $A_{1}(G)\ge A_{3}(G)$.
\end{corollary}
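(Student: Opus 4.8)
The plan is to specialize Theorem \ref{t:PropRegular} to the case $\D = 3$ and simply assemble the stated shape and inequalities from the four cited items, supplemented by the handshake identity. There is no genuinely hard content here; the corollary is a bookkeeping consequence, so the work is in matching each claim to the right item and checking parities.

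First I would read off the four-term form from item vi). Since $A(G;x)$ is an even or an odd function, every nonzero coefficient $A_k(G)$ must have exponent $n+k$ of the same parity as $n+\D = n+3$; equivalently $k$ must be odd. Hence $A_{-2}(G)=A_0(G)=A_2(G)=0$, and only the terms with $k\in\{-3,-1,1,3\}$ can survive, which gives exactly the displayed expression. I would also note that all four terms are genuinely present: item ii) gives $A_{-3}(G)=n>0$, and item iv) gives $A_3(G)\ge 1$, while the middle coefficients are bounded below by positive quantities as shown next.

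Next I would pin down the individual coefficients. Item ii) yields $A_{-3}(G)=A_{-\D}(G)=n$ immediately. For the strict inequality $n<m$ I would invoke the handshake identity $2m=n\D=3n$, so that $m=\tfrac{3n}{2}>n$. The bound $m\le A_{-1}(G)$ is precisely item v) specialized to $\D=3$, namely $A_{-\D+2}(G)\ge m$. This settles the chain $A_{-3}(G)=n<m\le A_{-1}(G)$.

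Finally, for $A_1(G)\ge A_3(G)$ I would combine items v) and iv). Item v) gives $A_{\D-2}(G)=A_1(G)\ge n+n_0\ge n$, while item iv) gives $A_3(G)=A_{\D}(G)\le n/(\D+1)=n/4$. Therefore $A_3(G)\le n/4< n\le A_1(G)$, which even yields a strict inequality. The only step requiring any care is the parity argument of item vi), since that is what collapses the seven a priori possible exponents down to the four appearing in the statement; once the coefficients are correctly indexed for $\D=3$, the remaining inequalities are direct.
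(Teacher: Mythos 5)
Your proposal is correct and matches the paper's intended argument: the paper states this corollary as a direct consequence of Theorem \ref{t:PropRegular}, items ii), iv), v) and vi), which is exactly the assembly you carry out (parity from vi) to kill the even-indexed coefficients, ii) for $A_{-3}(G)=n$, v) for $A_{-1}(G)\ge m$ and $A_1(G)\ge n$, iv) for $A_3(G)\le n/4$, plus the handshake identity $2m=3n$ for $n<m$). Your closing observation that $A_1(G)\ge A_3(G)$ actually holds strictly is a small bonus beyond what the paper claims.
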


A finite sequence of real numbers $(a_{0} , a_{1} , a_{2} , . . . , a_{n})$ is said to be \emph{unimodal}
if there is some $k \in \{0, 1, . . . , n\}$, called the \emph{mode} of the sequence, such that
$$a_{0} \leq . . . \leq a_{k-1} \leq a_{k} \geq a_{k+1} \geq . . . \geq a_{n}.$$
A polynomial is called unimodal
if the sequence of its coefficients is unimodal.
Therefore, we have the following result 
for alliance polynomial of a cubic graph, note that $A^\dag$ does not satisfy it.

\begin{corollary}\label{c:CubUnim}
 For any cubic graph $G$, $x^{(3-n)/2}\,A(G;\sqrt{x}\,)$ is an unimodal polynomial. 
\end{corollary}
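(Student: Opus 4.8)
The plan is to make the substitution in the statement completely explicit and then read off unimodality directly from the inequalities already recorded in Corollary~\ref{c:Cubic}. First I would start from the four-term expression
\[
A(G;x)=A_{-3}(G)\, x^{n-3}+A_{-1}(G)\, x^{n-1}+A_1(G)\, x^{n+1}+A_{3}(G)\, x^{n+3}
\]
guaranteed by Corollary~\ref{c:Cubic} (this is where the fact that only the four exponents $n\pm 1,\,n\pm 3$ can occur is essential, and it rests on item~vi) of Theorem~\ref{t:PropRegular}). Replacing $x$ by $\sqrt{x}$ halves every exponent, and multiplying by $x^{(3-n)/2}$ shifts each halved exponent down by $(n-3)/2$. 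The key computation is that the four resulting exponents are exactly $0,1,2,3$, independently of the parity of $n$, so that
\[
x^{(3-n)/2}\,A(G;\sqrt{x}\,)=A_{-3}(G)+A_{-1}(G)\,x+A_1(G)\,x^{2}+A_{3}(G)\,x^{3}
\]
is a genuine polynomial of degree three with coefficient sequence $\big(A_{-3}(G),\,A_{-1}(G),\,A_1(G),\,A_3(G)\big)$.

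Next I would invoke the inequalities from Corollary~\ref{c:Cubic}, namely $A_{-3}(G)=n<m\le A_{-1}(G)$ and $A_1(G)\ge A_3(G)$. Writing $a_0=A_{-3}(G)$, $a_1=A_{-1}(G)$, $a_2=A_1(G)$, $a_3=A_3(G)$, these say precisely that $a_0\le a_1$ and $a_2\ge a_3$. The only comparison not controlled by Corollary~\ref{c:Cubic} is that between the two middle coefficients $a_1=A_{-1}(G)$ and $a_2=A_1(G)$, so the argument finishes with a trivial dichotomy: if $a_1\ge a_2$ then $a_0\le a_1\ge a_2\ge a_3$ is unimodal with mode $k=1$, while if $a_1\le a_2$ then $a_0\le a_1\le a_2\ge a_3$ is unimodal with mode $k=2$. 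In either case the four-term sequence is unimodal, which is exactly the assertion.

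I expect the only real content to lie in the first paragraph: recognizing that the gapped polynomial $A(G;x)$ (which, by the parity statement of Theorem~\ref{t:PropRegular}~vi), carries all its mass on exponents of a single parity) collapses under $x\mapsto\sqrt{x}$ into a \emph{dense} cubic whose coefficients are the $A_k(G)$ in their natural order. Once this is in place there is essentially no obstacle, since unimodality of a four-term sequence needs only monotonicity at the two ends, and this is precisely what $A_{-3}(G)<A_{-1}(G)$ and $A_1(G)\ge A_3(G)$ supply. I would also remark that it is exactly the freedom in the middle comparison $A_{-1}(G)$ versus $A_1(G)$ that would be lost for $A^\dag(G;x)$, explaining the parenthetical claim in the statement that $A^\dag$ fails this property.
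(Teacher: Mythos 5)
Your proof is correct and follows exactly the route the paper intends: Corollary \ref{c:CubUnim} is stated as a direct consequence of Corollary \ref{c:Cubic}, and your explicit computation that the substitution collapses the exponents to $0,1,2,3$, together with the dichotomy on the middle comparison using $A_{-3}(G)<A_{-1}(G)$ and $A_1(G)\ge A_3(G)$, is precisely the argument the paper leaves implicit.
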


\begin{theorem}\label{c:ConComp}
   Let $G$ be any connected graph. Then $G$ is regular if and only if $A_{\D}(G) = 1$.
\end{theorem}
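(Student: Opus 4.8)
The plan is to reduce the statement almost entirely to property vi) of Theorem \ref{p:AlliPoly}, which identifies the top coefficient $A_{\D}(G)$ as the number of connected components of $G$ that are $\D$-regular. Since $\D$ denotes the maximum degree of $G$, the first observation I would record is that for a \emph{connected} graph being regular is the same as being $\D$-regular: a connected regular graph has all its degrees equal to the common value, which is necessarily the maximum degree $\D$, and conversely a $\D$-regular graph is trivially regular. Keeping this equivalence explicit is important, because $A_{\D}(G)$ counts $\D$-regular components, not arbitrary regular ones.

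First I would prove the forward implication. Assuming $G$ is connected and regular, it is $\D$-regular and consists of a single connected component, namely $G$ itself, which is $\D$-regular. Hence the number of $\D$-regular components equals $1$, and property vi) of Theorem \ref{p:AlliPoly} gives $A_{\D}(G)=1$.

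For the converse, suppose $A_{\D}(G)=1$. By the same property, $G$ has exactly one connected component that is $\D$-regular. Since $G$ is connected, its only connected component is $G$ itself, so that component must be the $\D$-regular one; therefore $G$ is $\D$-regular and in particular regular. This settles both directions.

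There is essentially no obstacle once property vi) is available; the only delicate point is the identification of \emph{regular} with $\D$-\emph{regular} in the connected case, which I would state explicitly as above. If instead one wanted a self-contained argument avoiding property vi), the main work would be re-deriving that an exact defensive $\D$-alliance $S$ forces $2\d_S(v)-\d(v)=\D$ for every $v\in S$, which together with $\d_S(v)\le\d(v)\le\D$ compels $\d_S(v)=\d(v)=\D$; thus every vertex of $S$ has degree $\D$ with all neighbors inside $S$, so $\langle S\rangle$ is precisely a $\D$-regular component. I expect this bookkeeping to be the only nontrivial step, and it is already subsumed by the cited property.
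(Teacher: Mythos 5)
Your proof is correct, but the converse direction is routed differently from the paper's. The paper invokes Theorem \ref{p:AlliPoly} vi) only for the forward implication; for the converse it argues from first principles: $A_{\D}(G)=1$ guarantees the \emph{existence} of an exact defensive $\D$-alliance $S$, and the defining inequality $\d_S(v)\ge\d_{\overline{S}}(v)+\D$ together with $\d_S(v)\le\d(v)\le\D$ forces $\d_S(v)=\D$ and $\d_{\overline{S}}(v)=0$ for every $v\in S$, so no edges leave $S$ and connectivity gives $S=V(G)$, i.e., $G$ is $\D$-regular. You instead apply vi) in both directions, reading $A_{\D}(G)=1$ as ``exactly one component is $\D$-regular'' and combining this with connectedness; that is shorter and perfectly valid given that vi) is a citable result, whereas the paper's argument is self-contained in that direction and uses only $A_{\D}(G)\ge 1$ (existence), not the exact count. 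One small caveat on your sketched self-contained alternative: an exact defensive $\D$-alliance gives $2\d_S(v)-\d(v)\ge\D$ for every $v\in S$, with equality attained at \emph{some} vertex (that is what exactness means); equality at every vertex is a consequence of the argument, not a hypothesis you may assume at the outset, though the bookkeeping goes through unchanged with the inequality.
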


\begin{proof}
   If $G$ is regular, then by Theorem \ref{p:AlliPoly} vi) we obtain $A_{\D}(G)=1$. Besides, if $A_{\D}(G)=1$, then there is an exact defensive $\D$-alliance $S$ in $G$ with $\d_S(v) \ge \d_{\bar{S}}(v) + \D \ge \D$ (i.e., $\d_S(v)=\D$ and $\d_{\overline{S}}(v)=0$) for every $v\in S$. So, the connectivity of $G$ gives that $G$ is a $\D$-regular graph.
\end{proof}

\begin{theorem}\label{t:Regulars}
   Let $G_1,G_2$ be two regular graphs. If $A(G_1;x)=A(G_2;x)$, then $G_1$ and $G_2$ have the same order, size, degree and number of connected components.
\end{theorem}

\begin{proof}
   Let $n_1,n_2$ be the orders of $G_1$ and $G_2$, respectively, and $\D_1,\D_2$ the degrees of $G_1$ and $G_2$, respectively. Then, by Theorem \ref{t:PropRegular} ii) and iii) we have
   \[
   n_1 - \D_1 = n_2 - \D_2 \quad \text{ and } \quad n_1+\D_1 = n_2+\D_2
   \]
   and we conclude
   \[
   n_1 = n_2 \quad \text{ and } \quad \D_1 = \D_2.
   \]
   Hence, both graphs have the same size.
   Finally, since $A_{\D_1}(G_1) = A_{\D_2}(G_2)$, they have the same number of connected components by Theorem \ref{p:AlliPoly} vi).
\end{proof}

\begin{corollary}\label{c:Reg1}
   Let $G_1,G_2$ be two regular graphs with orders $n_1$ and $n_2$, and degrees $\D_1$ and $\D_2$, respectively. If $n_1 \neq n_2$ or $\D_1 \neq \D_2$, then $A(G_1;x) \neq A(G_2;x)$.
\end{corollary}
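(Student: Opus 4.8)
The plan is to derive this as the contrapositive of Theorem \ref{t:Regulars}, which I may assume. That theorem asserts that if two regular graphs $G_1, G_2$ satisfy $A(G_1;x) = A(G_2;x)$, then they agree in order, size, degree, and number of connected components. The corollary isolates the order-and-degree part of that conclusion and states it contrapositively, so essentially no new work is required.

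Concretely, I would argue by contradiction. Suppose the hypothesis $n_1 \neq n_2$ or $\D_1 \neq \D_2$ holds, yet $A(G_1;x) = A(G_2;x)$. Since $G_1$ and $G_2$ are regular, Theorem \ref{t:Regulars} applies and forces $n_1 = n_2$ and $\D_1 = \D_2$, directly contradicting the assumption that at least one of these equalities fails. Hence the polynomials must differ, which is the claim.

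If one prefers a direct (non-contradiction) phrasing, the same reasoning reads as the logical contrapositive: the implication ``$A(G_1;x) = A(G_2;x) \Rightarrow (n_1 = n_2 \text{ and } \D_1 = \D_2)$'' from Theorem \ref{t:Regulars} is logically equivalent to ``$(n_1 \neq n_2 \text{ or } \D_1 \neq \D_2) \Rightarrow A(G_1;x) \neq A(G_2;x)$,'' which is exactly the statement of the corollary. In either formulation the only ingredient is the two invariants (order and degree) recovered from the alliance polynomial of a regular graph via Theorem \ref{t:PropRegular} (items ii and iii), already packaged inside Theorem \ref{t:Regulars}.

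I do not anticipate any genuine obstacle here, since the content is entirely carried by Theorem \ref{t:Regulars}; the only point to state carefully is that the regularity hypothesis on both $G_1$ and $G_2$ is needed to invoke that theorem, as the recovery of $n$ and $\D$ from the extremal degrees $\Deg_{\min}$ and $\Deg$ relies on the regular-graph identities $\Deg_{\min}(A(G;x)) = n-\D$ and $\Deg(A(G;x)) = n+\D$.
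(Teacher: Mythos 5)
Your proof is correct and matches the paper's intent exactly: the corollary is stated there without proof precisely because it is the contrapositive of Theorem \ref{t:Regulars}, which is the argument you give. Nothing more is needed.
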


%
%
%

The next theorem characterizes the degree of any regular graph by the number of non-zero coefficients of its alliance polynomial.

\begin{theorem}\label{t:PolyReg}
Let $G$ be any $\D$-regular graph with order $n$. Then $A(G;x)$ has $\D+1$ non-zero coefficients. Furthermore,
$$A(G;x)=\displaystyle\sum_{i=0}^{\D} A_{\D-2i}(G)\ x^{n+\D-2i},$$
with $A_{-\D}(G)=n$, $A_{\D}(G)\ge1$, and
$$A_{\D-2i}(G) \ge \frac{n {\D \choose i}}{\min\{\D,n-i\}} \quad \text{for  } \ 1\le i\le \D-1 \text{  if  } \D>0.$$
\end{theorem}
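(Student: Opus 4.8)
The plan is to first pin down the shape of $A(G;x)$ and then prove the lower bounds, from which the count of non-zero coefficients follows for free. By Theorem \ref{t:PropRegular} vi) every non-zero term of $A(G;x)$ has exponent $n+k$ with $k\equiv\D\pmod 2$, so the only admissible exponents are $n+\D-2i$ for $i=0,1,\dots,\D$; this already yields the displayed expansion and shows there are \emph{at most} $\D+1$ non-zero coefficients. The two extreme ones are settled by earlier results: $A_{-\D}(G)=n$ is Theorem \ref{t:PropRegular} ii) (the case $i=\D$) and $A_{\D}(G)\ge 1$ is Theorem \ref{t:PropRegular} iv) (the case $i=0$). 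Thus the whole theorem reduces to establishing, for $1\le i\le\D-1$, the inequality $A_{\D-2i}(G)\ge n\binom{\D}{i}/\min\{\D,n-i\}$; since the right-hand side is strictly positive, this simultaneously shows that each of the remaining $\D-1$ coefficients is non-zero, and hence that $A(G;x)$ has exactly $\D+1$ non-zero coefficients.

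For the lower bound I would invoke Theorem \ref{t:PropRegular} i), which identifies $A_{\D-2i}(G)$ with the number of connected induced subgraphs of $G$ whose minimum degree equals $\D-i$. The plan is to produce many such subgraphs by a double-counting argument. Let $\mathcal{P}$ be the set of pairs $(w,R)$ where $w\in V$ and $R$ is an $i$-element subset of $N(w)$, so that $|\mathcal{P}|=n\binom{\D}{i}$. To each pair associate $\phi(w,R)$, the connected component of $\langle V\setminus R\rangle$ containing $w$. A short verification shows $\phi(w,R)$ is a connected induced subgraph of minimum degree exactly $\D-i$: the vertex $w$ loses precisely its $i$ neighbours in $R$, so $\d_{\phi(w,R)}(w)=\D-i$, while every other vertex of the component loses at most $|R|=i$ neighbours and so retains degree at least $\D-i$. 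Consequently $A_{\D-2i}(G)\ge|\phi(\mathcal{P})|\ge |\mathcal{P}|/M$, where $M$ is the maximum number of pairs mapping to a single subgraph.

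The heart of the argument --- and the step I expect to be the main obstacle --- is the multiplicity bound $M\le\min\{\D,n-i\}$. Fix a subgraph $T$ in the image. If $\phi(w,R)=T$, then $R$ must consist exactly of the neighbours of $w$ lying outside $T$ (any external neighbour of $w$ not removed would be dragged into the component), so $R=N(w)\setminus T$ is determined by $w$ and $T$; hence at most one pair has first coordinate $w$, giving $M\le|T|\le n-i$. The sharper bound $M\le\D$ comes from a rigidity observation: writing $\partial T$ for the set of vertices outside $T$ adjacent to $T$, the fact that $T$ is a \emph{full} component of $\langle V\setminus R\rangle$ forces $\partial T\subseteq R=N(w)\setminus T\subseteq\partial T$, so in fact $R=\partial T$ is the \emph{same} $i$-set for every admissible $w$, and every such $w$ is adjacent to each vertex of $\partial T$. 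Since $i\ge 1$, the set $\partial T$ is non-empty; fixing any $p\in\partial T$, all admissible $w$ lie in $N(p)$, whence $M\le\d(p)=\D$. Combining the two estimates yields $M\le\min\{\D,n-i\}$ and therefore the desired inequality. The only points needing real care are the verification that $\phi(w,R)$ genuinely has minimum degree $\D-i$ (and not smaller) and the two containments establishing $R=\partial T$; once these are in place, the statement about the number of non-zero coefficients is immediate.
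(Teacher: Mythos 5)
Your proposal is correct and takes essentially the same route as the paper: the identical construction (fix a vertex, delete $i$ of its neighbours, take the connected component containing it, and verify via Theorem \ref{t:PropRegular} i) that it is an exact defensive ($\D-2i$)-alliance) followed by the identical double count with multiplicity bound $\min\{\D,n-i\}$. Your rigidity argument $R=\partial T$ simply spells out in detail what the paper asserts tersely in its parenthetical remarks (``once for each $S_w^*$ with $w\in V\setminus\{v_1,\ldots,v_i\}$'' and ``once for each $S_w^*$ with $w\sim v_1$''), so it is a welcome clarification rather than a different method.
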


\begin{proof}
Since $G$ is $\D$-regular, by Theorem \ref{t:PropRegular} we have $A_{-\D}(G)=n$, $A_{\D}(G)\ge1$ and $A(G;x)$ is an even or an odd function of $x$.
Assume now that $\D>0$ and fix $1\le i \le \D-1$. Let us consider $u\in V$ and $v_1,\ldots,v_i$ different vertices in $N(u)$. Denote by $S_u:=V\setminus\{v_1,\ldots,v_i\}$.
Then, we have that $\d_{S_u}(v)\ge \D-i$ and $\d_{\overline{S_u}}(v)\le i$ for every $v\in S_u$; furthermore, the equalities hold at $u$.
Let $S_u^*\subset S_u$ such that $\langle S_u^*\rangle$ is the connected component of $\langle S_u\rangle$ which contains $u$. So, $S_u^*$ is an exact defensive ($\D-2i$)-alliance and $A_{\D-2i}(G)>0$.
Since each set $S_u^*$ can appear at most $n-i$ times (once for each $S_w^*$ with $w\in V\setminus \{v_1,\ldots,v_i\}$), and at most $\D$ times (once for each $S_w^*$ with $w\sim v_1$), we obtain $A_{\D-2i}(G)\ge n {\D \choose i}/ \min\{\D,n-i\}$.
\end{proof}

A \emph{Hamiltonian cycle} is a cycle in a graph that visits each vertex exactly once.
A graph that contains a Hamiltonian cycle is called a \emph{Hamiltonian graph}.
The following theorem is a well known result in graph theory which will be useful.

\begin{theorem}[Dirac 1952]\label{Dirac}
A graph with order $n \ge 3$ is Hamiltonian if every vertex has degree $n / 2$ or greater.
\end{theorem}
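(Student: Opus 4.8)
The statement is the classical theorem of Dirac, and the plan is to reproduce its standard longest-path argument. First I would pick a path $P = v_0 v_1 \cdots v_k$ of maximum length in $G$. Since $P$ cannot be prolonged at either end, every neighbor of $v_0$ and every neighbor of $v_k$ must already lie on $P$; hence all $\d(v_0) \ge n/2$ neighbors of $v_0$ appear among $v_1,\dots,v_k$, and all $\d(v_k) \ge n/2$ neighbors of $v_k$ appear among $v_0,\dots,v_{k-1}$.

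Next I would build a cycle on the vertex set of $P$ by a crossing argument. Put $A = \{\, i : v_0 \sim v_{i+1}\,\}$ and $B = \{\, i : v_k \sim v_i\,\}$, both subsets of $\{0,1,\dots,k-1\}$, a set with $k \le n-1$ elements. From the degree bounds one has $|A| \ge n/2$ and $|B| \ge n/2$, so
\[
|A \cap B| \ge |A| + |B| - k \ge \frac n2 + \frac n2 - (n-1) = 1 .
\]
Choosing any $i \in A \cap B$ yields $v_0 \sim v_{i+1}$ and $v_k \sim v_i$, so that
\[
C = v_0 v_1 \cdots v_i \, v_k v_{k-1} \cdots v_{i+1} \, v_0
\]
is a cycle passing through exactly the vertices $v_0,\dots,v_k$ of $P$.

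Finally I would show that $C$ is spanning. I would first observe that $G$ is connected: for any two vertices $x \ne y$ with $x \not\sim y$ we have $N(x), N(y) \subseteq V\setminus\{x,y\}$, while $|N(x)| + |N(y)| \ge n > n-2$, so their neighborhoods must intersect; thus any two vertices are joined by a path. If some vertex $u$ lay outside $C$, connectivity would provide an edge from $u$ to some $v_j$ on $C$, and opening $C$ at $v_j$ and attaching $u$ would produce a path on $k+2$ vertices, contradicting the maximality of $P$. Hence $C$ contains all $n$ vertices and is a Hamiltonian cycle, so $G$ is Hamiltonian.

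The main obstacle is the crossing step: one must arrange the index sets $A$ and $B$ so that the hypothesis $\d(v) \ge n/2$ converts precisely into $A \cap B \ne \emptyset$, and then check that the reversed arc $v_k v_{k-1} \cdots v_{i+1}$ together with the two chords $v_k v_i$ and $v_0 v_{i+1}$ closes up into a genuine cycle on the vertices of $P$ rather than a self-intersecting closed walk.
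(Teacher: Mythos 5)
Your proposal is correct: it is the classical longest-path/crossing-chord proof of Dirac's theorem. Note that the paper itself does not prove this statement at all --- it is quoted as a well-known result (Dirac 1952) to be used in later theorems --- so there is no ``paper's proof'' to compare against; your argument is the standard textbook one, and every step (maximality of the path forcing all neighbors of $v_0$ and $v_k$ onto it, the counting bound $|A\cap B|\ge |A|+|B|-k\ge 1$, and the closing of the cycle) is sound, including the boundary case $i=k-1$ where one chord is just the path edge $v_{k-1}v_k$. The only wording to tighten is the final absorption step: connectivity gives you a \emph{path} from $u$ to the cycle $C$, not necessarily an edge incident to $u$ itself; you should take the first vertex of that path lying on $C$, say $v_j$, and its predecessor $w\notin V(C)$, and attach $w$ (rather than $u$) to the cycle opened at $v_j$ to obtain a path on $k+2$ vertices contradicting the maximality of $P$. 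With that one-line adjustment the proof is complete.
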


In what follows we will use the following notation: for any $A,B\subset V$, we denote by $N(A,B)$ the number of edges with one endpoint in $A$ and the other endpoint in $B$.

\begin{theorem}\label{t:An+D-2=n}
  Let $G$ be any $\D$-regular graph with order $n < 2\D$. Then $A_{\D-2}(G)=n$.
\end{theorem}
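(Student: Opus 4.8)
The plan is to use the combinatorial description of the coefficients from Theorem~\ref{t:PropRegular} i): taking $i=\D-1$, the coefficient $A_{\D-2}(G)$ equals the number of connected induced subgraphs $\langle S\rangle$ whose minimum degree is exactly $\D-1$, that is, nonempty $S\subseteq V$ with $\langle S\rangle$ connected such that $\d_S(v)\ge\D-1$ (equivalently $\d_{\overline S}(v)\le 1$) for every $v\in S$, with equality at some vertex. The strategy is to prove that the only such sets are the $n$ sets $S=V\setminus\{v\}$, one for each $v\in V$, and that each of these is indeed admissible.

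First I would record two easy consequences of the hypothesis $n<2\D$. Since $G$ is $\D$-regular, every connected component has at least $\D+1$ vertices, so two or more components would force $n\ge 2\D+2$; hence $G$ is connected. Likewise, if $v$ were a cut vertex, each component of $G\setminus\{v\}$ would contain at least $\D$ vertices (each of its vertices has at least $\D-1$ neighbours inside it), giving $n\ge 1+2\D$; hence $G$ has no cut vertex. (Both facts also follow from $\D>n/2$ via Dirac's Theorem~\ref{Dirac}, which yields Hamiltonicity and thus $2$-connectedness.) In particular every set $V\setminus\{v\}$ induces a connected subgraph, and since $\D\ge 1$ its minimum degree is exactly $\D-1$; this exhibits $n$ distinct admissible sets and recovers the bound $A_{\D-2}(G)\ge n$ of Theorem~\ref{t:PropRegular} v).

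The heart of the argument is the matching upper bound, obtained by showing $|\overline S|=1$ for every admissible $S$. Write $t:=|\overline S|$ and count the edges $N(S,\overline S)$ in two ways. Since $\d_{\overline S}(v)\le 1$ for each of the $|S|=n-t$ vertices of $S$, we get $N(S,\overline S)\le n-t$. Summing degrees over $\overline S$ gives $t\D=N(S,\overline S)+2|E(\langle\overline S\rangle)|$, and since $|E(\langle\overline S\rangle)|\le\binom{t}{2}$ we obtain $N(S,\overline S)\ge t\D-t(t-1)=t(\D-t+1)$. Combining the two estimates yields
\begin{equation}\label{eq:tbound}
t(\D+2-t)\le n<2\D.
\end{equation}
Moreover, any $v\in S$ has at least $\D-1$ neighbours inside $S$, so $|S|\ge\D$ and hence $t=n-|S|\le n-\D<\D$.

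It remains to analyze \eqref{eq:tbound} for integers $t$ with $1\le t\le\D-1$. The key algebraic observation is the factorization $t(\D+2-t)-2\D=-(t-2)(t-\D)$, which is nonnegative precisely for $2\le t\le\D$. Thus for every $t$ with $2\le t\le\D-1$ one has $t(\D+2-t)\ge 2\D>n$, contradicting \eqref{eq:tbound}; so no admissible $S$ has $t\ge 2$. Since the minimum degree of $\langle S\rangle$ equals $\D-1<\D$, the case $t=0$ (which forces $S=V$ with minimum degree $\D$) is excluded, leaving $t=1$, i.e.\ $S=V\setminus\{v\}$ for a unique $v$. Hence the admissible sets are exactly the $n$ sets $V\setminus\{v\}$, and $A_{\D-2}(G)=n$. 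The main obstacle is the two-way edge count leading to \eqref{eq:tbound}; once the clean factorization of the resulting quadratic is in hand the conclusion is immediate. A minor point to treat carefully is the boundary case $\D=2$, where the range $2\le t\le\D-1$ is empty and $t\le\D-1=1$ already forces $t=1$ directly.
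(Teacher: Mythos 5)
Your proof is correct and follows essentially the same route as the paper: the lower bound comes from the $n$ vertex-deleted sets $V\setminus\{v\}$ (connected because $n<2\D$ rules out cut vertices), and the upper bound comes from the same two-way count of edges across the cut $(S,\overline{S})$ --- your inequality $t(\D+2-t)\le n$ is precisely the paper's condition $P(|S|)\ge 0$ under the substitution $t=n-|S|$. The only cosmetic difference is in finishing the quadratic analysis: you factorize, via $t(\D+2-t)-2\D=-(t-2)(t-\D)$, while the paper evaluates $P$ at $n/2$ and $n-2$ and uses convexity of the parabola; both yield the same contradiction for $2\le t$.
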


\begin{proof}
Notice that $\D\ge 2$, since otherwise, such a graph $G$ does not exist; furthermore, $n\ge \D+1 \ge 3$.
We have that $G$ is a Hamiltonian graph by Theorem \ref{Dirac}.
Besides, by Theorem \ref{t:PropRegular} i), we have that $A_{\D-2}(G)$ is the number of connected induced subgraphs of $G$ with minimum degree $\D-1$. Let us consider $u\in V$ and define $S_u:=V\setminus\{u\}$.
Since $G$ is a Hamiltonian graph, $\langle S_u\rangle$ is connected. Besides, we have $\d_{S_u}(v)\ge \D-1 \ge \d_{\overline{S_u}}(v) + \D-2$ for all $v\in S_u$ and the equality holds at $w\in N(u)$. So, $S_u$ is an exact defensive ($\D-2$)-alliance in $G$ and $A_{\D-2}(G)\ge n$.

Seeking for a contradiction assume that there is an exact defensive ($\D-2$)-alliance $S\subset V$ with $|S|\le n-2$. Notice that $|S|\ge\D>n/2$, by Theorem \ref{t:PropRegular} i).
Then, since any vertex in $S$ has degree $\D$ in $G$ with at most one edge among $S$ and $\overline{S}$, we have
$$N(S,S) + N(S,\overline{S}) = \frac{|S| \D}2 + \frac{N(S,\overline{S})}2 \le \frac{|S| \D}2 + \frac{|S|}2 = \frac{|S|(\D+1)}2.$$
Besides, since $|\overline{S}|=n-|S|$, we have
\[
N(\overline{S},\overline{S}) \le \frac{(n-|S|)(n-|S|-1)}2.
\]
If $m$ denotes the size of $G$, then
\[
\begin{aligned}
0 & = 2\left(N(S,S) + N(S,\overline{S}) + N(\overline{S},\overline{S})\right) - 2m \\
&\le |S|(\D+1) + (n-|S|)(n-|S|-1) - n\D \\
& = |S|^2 + |S|(\D+2-2n) + n^2 - n - n\D.
\end{aligned}
\]
Define $P(x):=x^2 + x(\D+2-2n) + n^2 - n - n\D$; then $P(|S|)\ge0$. Since
\[
\begin{aligned}
P\left(\frac{n}2\right) &= \frac{n^2}4 + \frac{n}2 (\D+2-2n) + n^2 - n - n\D \\
&= \frac{n^2}4 + \frac{n\D}2 + n -n^2 + n^2 - n - n\D \\
&= \frac{n}4(n-2\D)<0
\end{aligned}
\]
and
\[
\begin{aligned}
P(n-2)&= (n-2)^2 + (n-2)(\D+2-2n) + n^2 - n - n\D \\
&= (n-2)^2 + (n-2)(\D-n) - (n-2)^2 + n^2 - n - n\D \\
&= n-2\D < 0,
\end{aligned}
\]
we obtain that $P(|S|)<0$. This is the contradiction we were looking for, so, there not exists an exact defensive ($\D-2$)-alliance $S$ with $|S|\le n-2$.
This finishes the proof since $V$ is an exact defensive $\D$-alliance.
\end{proof}

A \emph{clique} in a graph $G = (V, E)$ is a subset $C$ of the vertex set $V$, such that $\langle C\rangle$ is a complete graph.

\begin{lemma}\label{l:clique}
 Let $G$ be any $\D$-regular graph with order $n$, $\D\ge3$ and $2\D\le n \le 2\D+1$. If $G$ contains two cliques of cardinality $\D$, then these cliques are disjoint.
 In particular, $G$ contains at most two cliques of cardinality $\D$.
\end{lemma}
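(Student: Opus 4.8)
The plan is a proof by contradiction resting on one elementary consequence of regularity: if $C\subseteq V$ is a clique with $|C|=\D$, then each $v\in C$ has $\d_C(v)=\D-1$, and since $\d(v)=\D$ it has exactly one neighbour outside $C$. Thus a $\D$-clique leaks only $\D$ edges to the rest of the graph. I would suppose that $G$ contains two \emph{distinct} $\D$-cliques $C_1,C_2$ with $C_1\cap C_2\neq\emptyset$ and derive a contradiction with the hypotheses $\D\ge 3$ and $2\D\le n\le 2\D+1$.

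First I would pin down the size of the overlap. Choosing $v\in C_1\cap C_2$, the fact that both $C_1$ and $C_2$ are cliques makes $v$ adjacent to every vertex of $(C_1\cup C_2)\setminus\{v\}$; as $\d(v)=\D$ this forces $|C_1\cup C_2|\le\D+1$, hence $|C_1\cap C_2|=2\D-|C_1\cup C_2|\ge\D-1$. Since $C_1\ne C_2$ have equal cardinality we also have $|C_1\cap C_2|\le\D-1$, so the overlap is exactly $\D-1$. I then write $A:=C_1\cap C_2$, $C_1=A\cup\{a\}$ and $C_2=A\cup\{b\}$ with $a\ne b$, so that $|C_1\cup C_2|=\D+1$.

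The core of the argument is a degree count on $R:=V\setminus(C_1\cup C_2)$, whose size is $|R|=n-\D-1\in\{\D-1,\D\}$ by the bounds on $n$. Every $w\in A$ is already adjacent to all $\D$ vertices of $(C_1\cup C_2)\setminus\{w\}$, so it sends no edge to $R$; and each of $a,b$ is adjacent to all of $A$, hence has at most one neighbour outside $C_1\cup C_2$. Therefore $N(C_1\cup C_2,R)\le 2$. Summing degrees over $R$, the number of edge-endpoints lying in $R$ is $|R|\,\D$, of which at most $2$ reach $C_1\cup C_2$ while the remaining ones stay inside $R$ (contributing at most $|R|(|R|-1)$), which gives
\[
|R|\,\D \;\le\; |R|(|R|-1)+2,\qquad\text{i.e.}\qquad |R|\big(\D-|R|+1\big)\le 2.
\]
For $|R|=\D$ the left side equals $\D\ge 3$, and for $|R|=\D-1$ it equals $2(\D-1)\ge 4$; either way it exceeds $2$, the desired contradiction. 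Hence two distinct $\D$-cliques cannot meet, that is, they are disjoint.

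Finally, the ``at most two'' claim follows immediately: pairwise disjoint $\D$-cliques occupy disjoint vertex sets, so having $k$ of them requires $k\D\le n\le 2\D+1$, and $k\ge 3$ would give $3\D\le 2\D+1$, i.e. $\D\le 1$, contradicting $\D\ge 3$. I expect the only delicate points to be the step forcing $|C_1\cap C_2|=\D-1$ and the clean bound $N(C_1\cup C_2,R)\le 2$; once these are in place, the degree count closes the argument uniformly, with no case analysis on whether $a\sim b$.
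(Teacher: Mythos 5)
Your proof is correct, and at the crucial step it takes a genuinely different (and more uniform) route than the paper. Both arguments begin identically: a vertex in the intersection forces $|C_1\cup C_2|\le\D+1$, hence the overlap is exactly $\D-1$, and both finish the ``at most two'' claim by the same cardinality count $3\D>2\D+1$. The difference is how the contradiction is reached. The paper compares the two counts of crossing edges: it asserts $N(C_1\cup C_2,\overline{C_1\cup C_2})=2$ and bounds $N(\overline{C_1\cup C_2},C_1\cup C_2)\ge|\overline{C_1\cup C_2}|\ge\D-1$ (each outside vertex needs at least one crossing neighbour), which only yields $\D\le 3$; it must then land in the case $\D=3$, $n=6$ and invoke the classification of cubic graphs on six vertices ($K_{3,3}$ and $P_2\Box K_3$), checking that neither contains two overlapping triangles. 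Your aggregated degree count over $R=V\setminus(C_1\cup C_2)$, namely $|R|\,\D\le|R|(|R|-1)+2$, is sharper: when $|R|=\D-1$ each vertex of $R$ is forced to have at least \emph{two} crossing neighbours, giving $N\ge 2(\D-1)$ rather than $\D-1$, and this extra factor is exactly what kills the $\D=3$ case numerically and removes the appeal to the classification. Your proof also handles the possibility $a\sim b$ implicitly through the one-sided bound $N(C_1\cup C_2,R)\le 2$, whereas the paper's equality $N=2$ tacitly assumes $a\nsim b$ (if $a\sim b$, the union would be a $(\D+1)$-clique, which must be ruled out separately, as in the paper's Remark \ref{r:exitClique}). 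So your version is self-contained and arguably tightens a small gap in the published argument.
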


\begin{proof}
Seeking for a contradiction, assume that there exist $S_1,S_2\subset V$ cliques of cardinality $\D$ with $S_1\cap S_2\neq\emptyset$. Denote by $r$ the number $r:= |S_1\cap S_2|$; then $1\le r \le \D-1$. Note that for any $v\in S_1\cap S_2$ we have $\d_{S_1\cup S_2}(v)= |S_1|-1 + |S_2\setminus S_1|= \D-1+\D-r$, so, we obtain $r=\D-1$.
Then, we have $|S_1\cup S_2|=\D+1$ and $\D-1\le|\overline{S_1\cup S_2}|\le \D$. Besides, we have $N(S_1\cup S_2,\overline{S_1\cup S_2}) = 2 = |(S_1\cup S_2)\setminus (S_1\cap S_2)|$ and, since $|\overline{S_1\cup S_2}|\le\D$, $N(\overline{S_1\cup S_2},S_1\cup S_2)\ge |\overline{S_1\cup S_2}|\ge \D-1$.
Since $N(S_1\cup S_2,\overline{S_1\cup S_2}) = N(\overline{S_1\cup S_2},S_1\cup S_2)$, we obtain $\D=3$ and $n=6$; therefore, $G$ is a graph isomorphic to either $K_{3,3}$ or the Cartesian product $P_2\Box K_3$. Thus, we obtain that there are not two non-disjoint cliques in $G$ with cardinality $\D$.
This finishes the proof since, by $n\le 2\D+1$, it is impossible to have three disjoint cliques of cardinality $\D$ contained in $G$.
\end{proof}

\begin{remark}\label{r:exitClique}
If $G$ is a $\D$-regular graph with $n \le 2\D+1$, then
$G$ does not contain a clique of cardinality greater than $\D$, since $2(\D+1)>2\D+1\ge n$.
\end{remark}

\begin{remark}\label{r:DisjointClique}
Let $G$ be any $\D$-regular graph with order $n$ and $\D\ge1$ such that $G$ has two disjoint cliques of cardinality $\D$. Then
\begin{enumerate}
  \item If $n=2\D$, then $G$ is isomorphic to the Cartesian product graph $P_2 \Box K_\D$.
  \item If $n=2\D+1$, then $\D$ is even (since $n\D=2m$) and $G$ can be obtained from $P_2 \Box K_\D$ by removing $\D/2$ copy edges of $P_2$ and connecting the $\D$ vertices with degree $\D-1$ with a new vertex. In particular, if $S$ is a clique of cardinality $\D$ in $G$, then $\overline{S}$ is not an exact defensive ($\D-2$)-alliance.
\end{enumerate}
\end{remark}

\begin{theorem}\label{t:An+D-2<=n+m}
  Let $G$ be any $\D$-regular graph with order $n$, size $m$, $\D\ge 3$ and $2\D\le n \le 2\D+1$. Then $n\le A_{\D-2}(G)\le n+m+2$.
\end{theorem}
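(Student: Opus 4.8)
The plan is to read $A_{\D-2}(G)$ through Theorem~\ref{t:PropRegular}~i): it counts the connected induced subgraphs $S\subsetneq V$ of minimum degree $\D-1$, that is, those $S$ for which \emph{every} vertex has at most one neighbour in $\overline S$. I will group these alliances by the size $t:=|\overline S|$ of the complement, noting that a vertex of $S$ has at least $\D-1$ neighbours inside $S$ so $|S|\ge\D$ and hence $1\le t\le n-\D$. Before counting I would dispose of two structural points forced by $n\le 2\D+1$: the graph is connected, since two $\D$-regular components would need at least $2(\D+1)>n$ vertices, and it has no cut vertex, because removing one would leave at least two pieces each of order $\ge\D$, forcing $n=2\D+1$ and two copies of $K_\D$ both completely joined to the cut vertex, which is incompatible with degree $\D$. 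With no cut vertices, Theorem~\ref{t:PropRegular}~v) gives $A_{\D-2}(G)\ge n$ immediately; concretely, the $n$ sets $V\setminus\{u\}$ are exactly the alliances with $t=1$.

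The engine of the upper bound is an edge count. Since each vertex of $S$ has at most one neighbour outside, $N(S,\overline S)\le |S|=n-t$; writing $N(S,\overline S)=\D t-2\,e(\overline S)$ gives $e(\overline S)\ge\big((\D+1)t-n\big)/2$. Together with $e(\overline S)\le\binom{t}{2}$ this yields $t^2-(\D+2)t+n\ge 0$, which for $2\D\le n\le 2\D+1$ leaves feasible only $t\in\{1,2\}$ together with the large values $t=n-\D$ (where $|S|=\D$ forces $S=K_\D$) and $t=\D$ (where the inequality forces $e(\overline S)=\binom{\D}{2}$, i.e.\ $\overline S=K_\D$, using that $\D(\D-1)$ is even). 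The only exception is $\D=4,\ n=9$, where $t=3$ is also feasible and the same inequality forces $e(\overline S)=\binom 32$, i.e.\ $\overline S$ is a triangle $K_3$; for $t=2$ the inequality forces $\overline S$ to be a single edge.

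It then remains to bound each group. The $t=1$ group contributes exactly $n$. The \emph{clique-related} alliances, those equal to a $K_\D$ or to the complement of one, number at most $2$: by Lemma~\ref{l:clique} there are at most two cliques of cardinality $\D$, and when two (necessarily disjoint) such cliques exist Remark~\ref{r:DisjointClique} rules out their complements as alliances, so a short split on whether $G$ contains $0$, $1$ or $2$ copies of $K_\D$ always yields $\le 2$. Every remaining alliance has $t=2$ (or $t=3$ when $\D=4$), and I would inject these into $E(G)$: send a $t=2$ complement $\{u,w\}$ to the edge $uw$, and a feasible triangle to one of its three edges. The two images are disjoint, since the endpoints of a $t=2$ complement have no common neighbour (a common neighbour would sit in $S$ with two outside neighbours) whereas the edges of a triangle do; and the map on triangles is injective because two feasible triangles cannot share an edge — if $\{u,w,v_1\}$ and $\{u,w,v_2\}$ were both feasible, then $v_2$ would be a vertex of $V\setminus\{u,w,v_1\}$ with the two neighbours $u,w$ inside $\{u,w,v_1\}$, contradicting feasibility. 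Hence this group has at most $m$ members, and the three bounds sum to $A_{\D-2}(G)\le n+m+2$.

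The step I expect to be most delicate is the case $n=2\D+1$. One must verify that the quadratic genuinely excludes every intermediate value of $t$ except the degenerate $t=3$ at $\D=4$, and, above all, handle the single-$K_\D$ subcase of the clique count by hand: there the complement of the unique clique can itself be a legitimate alliance, so Remark~\ref{r:DisjointClique} does not apply and one must argue directly that this contributes at most one further alliance. Pinning down exactly the additive constant $2$ through this case analysis, rather than losing it to over-counting, is the crux of the argument.
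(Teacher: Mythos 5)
Your proof is correct, and its skeleton matches the paper's: both read $A_{\D-2}(G)$ through Theorem \ref{t:PropRegular} i), both establish connectivity and the absence of cut vertices so that the $n$ sets $V\setminus\{u\}$ give the lower bound, both use an edge count between $S$ and $\overline{S}$ to exclude intermediate complement sizes, and both invoke Lemma \ref{l:clique} together with Remark \ref{r:DisjointClique} to cap the clique-related alliances at $2$. The genuine difference lies in how the exceptional cases are closed. The paper disposes of $\D=3$ by computing $A_1(K_{3,3})$ and $A_1(P_2\Box K_3)$ explicitly, and---more importantly---it handles the one surviving intermediate case ($r=3$, $\D=4$, $n=9$) by brute force: ``a simple computation gives that these five graphs $G$ verify $A_2(G)<9+18+2$.'' You instead note that such a complement must induce a triangle (your quadratic forces $e(\overline{S})=\binom{3}{2}$) and build an injection from the $t=2$ and $t=3$ alliances \emph{jointly} into $E(G)$: a $t=2$ complement is an edge whose endpoints have no common neighbour, every edge of a feasible triangle has one, and two feasible triangles cannot share an edge, so the whole group is bounded by $m$ with no computation. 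This buys a uniform, computation-free argument that covers $\D=3$ and the $\D=4$, $n=9$ case in the same stroke, at the cost of a slightly more delicate disjointness/injectivity check; the paper's route is shorter to state but leans on machine verification not reproduced in print. One cosmetic caveat: your claim that the quadratic leaves only $t\in\{1,2\}\cup\{\D,\,n-\D\}$ admits a second ``exception'' at $\D=3$, $n=7$ (negative discriminant, so no value of $t$ is excluded), but this is vacuous because no cubic graph of odd order exists; a parenthetical remark to that effect would make the case analysis airtight.
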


\begin{proof}
Note that if $\D=3$ then $n=6$, and $G$ is a graph isomorphic to either $K_{3,3}$ or $P_2\Box K_3$. Thus, a simple computation gives $6\le A_{1}(K_{3,3})=15\le 6 + 9 + 2$ and $6\le A_{1}(P_2\Box K_3) = 11 \le 6 + 9 + 2$.

Assume now that $\D\ge 4$.
Clearly, $G$ is a connected graph and $\diam G = 2$, since $2\D>n-2$.

First we prove that $G$ does not have cut vertices. If $n=2\D$, then $G$ is a Hamiltonian graph by Theorem \ref{Dirac}.
If $n=2\D+1$, seeking for a contradiction assume that there is a cut vertex $w$ in $G$. Let $S_1,S_2 \subset V$ with $S_1\cup S_2\cup \{w\}=V$ such that $\langle S_1\rangle$ and $\langle S_2\rangle$ are disjoint.
Without loss of generality we can assume that $|S_1|\le \D \le |S_2|$. Since $\d_{S_1}(w),\d_{S_2}(w)\ge1$, $\d_{S_1}(w)+\d_{S_2}(w)=\D$ and $\d_{S_1}(u)\le |S_1|-1 \le \D-1$ for all $u\in S_1$, we have $\d_{S_1}(w)=|S_1|$ and $\d_{S_1}(u)=\D-1$ for all $u\in S_1$. Then, we obtain that $|S_1|=\D$, but this is a contradiction since $\d_{S_1}(w)=\D-\d_{S_2}(w)\le \D-1<\D=|S_1|=\d_{S_1}(w)$.
Then, $G$ does not have cut vertices.

By Theorem \ref{t:PropRegular} i), we have that $A_{\D-2}(G)$ is the number of connected induced subgraphs of $G$ with minimum degree $\D-1$; thus, any exact defensive ($\D-2$)-alliance $S$ in $G$ verifies $|S|\ge \D$.
Let us consider $u\in V$ and denote by $S_u:=V\setminus\{u\}$.
Since $G$ does not have cut vertices, $\langle S_u\rangle$ is connected. Besides, we have $\d_{S_u}(v)\ge \D-1 \ge \d_{\overline{S_u}}(v) + \D-2$ for all $v\in S_u$ and the equality holds for every $v\in N(u)$; so, $S_u$ is an exact defensive ($\D-2$)-alliance in $G$. Thus, $A_{\D-2}(G)\ge n$.

Let us consider $u_1,u_2\in V$ with $u_1\neq u_2$ and define $S_{u_1,u_2}:=V\setminus\{u_1,u_2\}$. If $u_1\nsim u_2$, then there is $w\in V$ with $u_1,u_2\in N(w)$ since $\d(u_1)+\d(u_2)= 2\D >|S_{u_1,u_2}|$; in fact, $S_{u_1,u_2}$ is not a defensive ($\D-2$)-alliance in $G$.
So, $S_{u_1,u_2}$ may be an exact defensive ($\D-2$)-alliance in $G$, if $u_1\sim u_2$; then there are at most $m$ exact defensive ($\D-2$)-alliances with $n-2$ vertices.
Consider now $u_1,\ldots,u_r\in V$ with $3\le r\le \D-1$ and $u_i\neq u_j$ if $i\neq j$.
Note that $S_r:=V\setminus\{u_1,\ldots,u_r\}$ is not a defensive ($\D-2$)-alliance in $G$ if $r>3$, since $N(\overline{S_r},S_r)\ge r(\D-r+1)=2\D-r+(r-2)(\D-r)> 2\D+1-r \ge |S_r|$. Besides, if $r=3$ and $\D\ge5$ (thus $\D-r\ge2$) we have the same inequality and then $S_r$ is not a defensive ($\D-2$)-alliance in $G$.
Note that, if $r=3$ and $n=2\D$, then $N(\overline{S_r},S_r)\ge 2\D-r+(r-2)(\D-r)>2\D-r=n-r\ge |S_r|$ and we also conclude that $S_r$ is not a defensive ($\D-2$)-alliance in $G$.
However, if $r=3$, $\D=4$ and $n=2\D+1$ (thus, $n=9$), then $S_r$ may be an exact defensive ($\D-2$)-alliance in $G$. But a simple computation gives that these five graphs $G$ verify $A_{2}(G)< 9+18+2$.

We analyze separately the cases $n=2\D$ and $n=2\D+1$.
Assume first that $n=2\D$. We only need to compute the possible exact defensive ($\D-2$)-alliances in $G$ with cardinality $\D$, since every defensive ($\D-2$)-alliance has at least $\D$ vertices and $n=2\D$.
If $S$ is an exact defensive ($\D-2$)-alliance in $G$, then $S$ is a clique of cardinality $\D$ and by Lemma \ref{l:clique} there are at most $2$ exact defensive ($\D-2$)-alliances with $\D$ vertices.
Assume now that $n=2\D+1$. So, $\D$ is even.
We only need to compute the possible exact defensive ($\D-2$)-alliances in $G$ with cardinalities $\D$ and $\D+1$.
If $S$ is an exact defensive ($\D-2$)-alliance in $G$ with $|S|=\D+1$, then $\d_S(u)\ge \D-1$ for every $u\in S$ and $\d_S(u_0)=\D$ for some $u_0\in S$, since otherwise $\d_S(u)=\D-1$ for every $u\in S$ and we conclude $(\D+1)(\D-1)=|S|(\D-1)=2m_S$, with $m_S$ the size of $\langle S\rangle$, which is not possible since $\D$ is even. Hence, $N(\overline{S},S)\le \D$; furthermore, since $|\overline{S}|=\D$, $\d_{S}(v)\ge 1$ for all $v\in \overline{S}$, and so, $\overline{S}$ is a clique.
If $S$ is an exact defensive ($\D-2$)-alliance in $G$ with $|S|=\D$, then $\d_S(u)\ge\D-1$ for every $u\in S$ and $S$ is a clique of cardinality $\D$. Lemma \ref{l:clique} completes the proof since if $G$ has two cliques of cardinality $\D$, then they are disjoint and Remark \ref{r:DisjointClique} gives that $\overline{S}$ is not an exact defensive ($\D-2$)-alliance in $G$.
\end{proof}

\begin{theorem}\label{t:RegCharact}
   Let $G$ be a $\D$-regular connected graph with order $n$ and let $G^*$ be a graph with order $n_1$ and, minimum and maximum degrees $\d_1$ and $\D_1$, respectively. If $A(G^*;x)=A(G;x)$, then $G^*$ is a connected graph with exactly $n$ vertices of degree $\D_1=\D+n_1-n$, $n_1\ge n$, $\D_1\ge\D$ and $\d_1 \equiv \D_1 (\text{mod } 2)$.

   Furthermore, if $n_1 > n$, then the following inequalities hold:
     \begin{equation}\label{eq:Poly2}
       \frac{\D_1+\d_1+2}2\le \D.
     \end{equation}
     \begin{equation}\label{eq:Poly1}
       \d_1+2 < \D < \D_1,
     \end{equation}
     \begin{equation}\label{eq:Poly3}
       \D + 1 \le \D_1 \le 2\D-3,
     \end{equation}
     \begin{equation}\label{eq:Poly4}
       \d_1+4 \le \D_1.
     \end{equation}

\end{theorem}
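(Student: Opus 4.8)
The plan is to extract every assertion from the equality $A(G^*;x)=A(G;x)$ by comparing the two extreme exponents, the bottom coefficient, and the parity. First I would compute the extreme degrees. For the connected $\Delta$-regular graph $G$, Theorem~\ref{t:PropRegular} gives $\Deg_{\min}(A(G;x))=n-\Delta$ and $\Deg(A(G;x))=n+\Delta$. For $G^*$ the smallest exponent is always $n_1-\Delta_1$: every singleton $\{v\}$ is an exact defensive $(-\delta(v))$-alliance, whereas any connected $S$ with $|S|\ge 2$ has index $k_S\ge 2-\Delta_1>-\Delta_1$, so the minimal exponent comes from the degree-$\Delta_1$ vertices and equals $n_1-\Delta_1$, its coefficient being their number by Theorem~\ref{p:AlliPoly}~v). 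Theorem~\ref{p:AlliPoly}~vii) gives $\Deg(A(G^*;x))\le n_1+\Delta_1$. Equating $\Deg_{\min}$ forces $\Delta_1=\Delta+n_1-n$; equating $\Deg$ and using the bound gives $n+\Delta\le 2n_1-n+\Delta$, i.e. $n_1\ge n$ and hence $\Delta_1\ge\Delta$. Matching the bottom coefficients yields $A_{-\Delta_1}(G^*)=A_{-\Delta}(G)=n$, so $G^*$ has exactly $n$ vertices of degree $\Delta_1$. Finally $A(G;x)$ is even or odd by Theorem~\ref{t:PropRegular}~vi), hence so is $A(G^*;x)$, and Theorem~\ref{p:AlliPoly}~iv) then forces all degrees of $G^*$ to share one parity, giving $\delta_1\equiv\Delta_1\pmod 2$.

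For connectivity I would split on $n_1$. If $n_1=n$ then $\Delta_1=\Delta$ and $G^*$ has $n$ vertices of degree $\Delta_1$ among its $n$ vertices, so $G^*$ is $\Delta$-regular; applying Theorem~\ref{t:Regulars} to the regular pair $(G,G^*)$ shows they have the same number of components, namely one. The substantial case is $n_1>n$. Writing $d:=n_1-n\ge 1$, so $\Delta_1=\Delta+d$, the maximal exact index occurring in $G^*$ is $k^*:=2\Delta-\Delta_1=\Delta-d$ (the top exponent $n+\Delta$ equals $n_1+k^*$), and $A_{k^*}(G^*)=A_{\Delta}(G)=1$ since $G$ is connected regular. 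Thus $G^*$ has a \emph{unique} exact defensive $k^*$-alliance. I would use this through the component decomposition $A(G^*;x)=\sum_{j}x^{\,n_1-n^{(j)}}A(C_j;x)$, where $C_1,\dots,C_t$ are the components and $n^{(j)}=|C_j|$: each whole $C_j$ is a connected induced subgraph with index $\delta^{(j)}$ (its minimum degree), every index in $A(C_j;x)$ is $\equiv\Delta_1\pmod 2$, and at most one alliance can have index $k^*$; hence at most one component has minimum degree $k^*$ and all the others have minimum degree $\le k^*-2$.

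Assuming connectivity, I would reduce all four displayed inequalities to \eqref{eq:Poly2}, which rewrites exactly as $\delta_1\le k^*-2=\Delta-d-2$. Granting it: \eqref{eq:Poly1} follows from $\delta_1+2\le\Delta-d\le\Delta-1<\Delta<\Delta+d=\Delta_1$; the left half of \eqref{eq:Poly3} is $\Delta_1=\Delta+d\ge\Delta+1$, and its right half $\Delta_1\le 2\Delta-3$ follows from $\delta_1+d+2\le\Delta$ together with $\delta_1\ge 1$ (valid once $G^*$ is connected with $n_1\ge 2$); and \eqref{eq:Poly4} follows from $\delta_1+4\le(\Delta-d-2)+4=\Delta-d+2\le\Delta+d=\Delta_1$ since $d\ge 1$. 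It therefore remains to prove \eqref{eq:Poly2}. Since $V(G^*)$ is an exact defensive $\delta_1$-alliance (once connectivity is known), $\delta_1\le k^*$; parity excludes $\delta_1=k^*-1$, so \eqref{eq:Poly2} is equivalent to ruling out the extremal value $\delta_1=k^*$.

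The main obstacle is exactly this pair of extremal statements—full connectivity when $n_1>n$ and the exclusion of $\delta_1=k^*$—both governed by the single fact $A_{k^*}(G^*)=1$. If $\delta_1=k^*$, then $V(G^*)$ is an index-$k^*$ alliance and I would seek a \emph{second} one: deleting a minimum-degree vertex $u$ that is not a cut vertex and all of whose neighbours have degree $\ge k^*+2$ leaves a connected set on which every vertex still satisfies $\delta_S(v)-\delta_{\overline{S}}(v)\ge k^*$, and if some other minimum-degree vertex is non-adjacent to $u$ its index stays exactly $k^*$, contradicting uniqueness. Symmetrically, a genuine splitting into $t\ge 2$ components would, via the decomposition above and the matching of top coefficients, either create a second maximal alliance or violate the coefficient equalities. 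The real care lies in the degenerate configurations where the minimum-degree vertices are few, mutually adjacent, or cut vertices, and I expect this to need a short case analysis rather than one clean estimate.
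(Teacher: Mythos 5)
Your opening reductions are correct and coincide with the paper's: comparing extreme exponents, the bottom coefficient and the parity yields $\D_1=\D+n_1-n$, $n_1\ge n$, exactly $n$ vertices of degree $\D_1$, $\d_1\equiv\D_1\ (\text{mod }2)$, and the case $n_1=n$; your observation that, once connectivity is known, all four inequalities reduce via parity to excluding $\d_1=k^*$ (where $k^*=2\D-\D_1=\D_1-2t$, $t=n_1-n$) is also right. But the proposal has a genuine gap at exactly the two points you yourself flag as needing ``a short case analysis'': connectivity of $G^*$ when $n_1>n$, and the exclusion of $\d_1=k^*$. Your construction of a second exact $k^*$-alliance (delete a non-cut minimum-degree vertex $u$ all of whose neighbours have degree $\ge k^*+2$, and hope for a second minimum-degree vertex non-adjacent to $u$) rests on hypotheses that can simply fail --- $u$ may be a cut vertex, its neighbours may have degree $k^*$, the minimum-degree vertices may be few and mutually adjacent --- and you give no argument in those configurations. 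Likewise, ``either create a second maximal alliance or violate the coefficient equalities'' is not a proof of connectivity: the component decomposition alone does not produce a contradiction, since a component need not contain any vertex of degree $\D_1$, and nothing so far forces two components to each carry an exact $k^*$-alliance.

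The missing idea, which makes both steps short (and is the heart of the paper's proof), is to use the set $S$ consisting of the $n$ vertices of degree $\D_1$ itself. Since $|\overline{S}|=t$, every $v\in S$ satisfies $\d_{\overline{S}}(v)\le t$, hence $\d_S(v)\ge \D_1-t\ge \d_{\overline{S}}(v)+(\D_1-2t)$; so every connected component of $\langle S\rangle$ is a defensive $(\D_1-2t)$-alliance, and because $n_1+(\D_1-2t)=n+\D$ is the degree of the polynomial, each such component is an \emph{exact} $k^*$-alliance. Monicity of $A(G;x)$ (its leading coefficient is $A_{\D}(G)=1$ by Theorem \ref{p:AlliPoly} vi), as $G$ is connected and regular) then forces $\langle S\rangle$ to be connected, so $S$ is the \emph{unique} exact $k^*$-alliance; exactness produces $v_j\in S$ with $\d_{\overline{S}}(v_j)=t=|\overline{S}|$, i.e. $\overline{S}\subseteq N(v_j)$, which gives connectivity of $G^*$ immediately. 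Finally, if $\d_1=k^*$, then $V(G^*)$ would be a second exact $k^*$-alliance, distinct from $S$ because $n_1>n$, contradicting the same monicity, which proves \eqref{eq:Poly2}. In other words, the ``second alliance'' you were seeking is simply $S$ versus $V(G^*)$; no deletion construction and no case analysis on degenerate configurations is needed.
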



\begin{proof}
   Since $A(G^*;x)=A(G;x)$ is a symmetric polynomial by Theorem \ref{t:PropRegular} vi), we conclude that $\d_1 \equiv \D_1 (\text{mod } 2)$ by Theorem \ref{p:AlliPoly} iv).
   By Theorems \ref{p:AlliPoly} v) and \ref{t:PropRegular} ii), $G^*$ has $n$ vertices of maximum degree $\D_1$, so, $n_1 \geq n$; besides, $n_1 - \D_1 = n-\D$. Note that if $n_1=n$ then $G^*$ is a $\D$-regular graph with $A_{\D}(G^*)=1$, so, Theorem \ref{c:ConComp} gives that $G^*$ is a connected graph.

   Assume that $n_1 > n$. Denote by $t:=n_1-n=\D_1 -\D$.
   Let $v_1,\ldots,v_n \in V(G^*)$ be the vertices in $G^*$ with degree $\D_1$ and define $S:=\{v_1,\ldots,v_n\}$.
   Note that for any $v\in S$ we have $\d_{S}(v) \ge \D_1 - t = t + (\D_1-2t) \ge \d_{\overline{S}}(v) + \D_1-2t$; hence, $S$ contains a defensive ($\D_1-2t$)-alliance $S_1$ and $k_{S_1} \ge \D_1-2t$. Therefore, there is at least one term of degree greater or equal than $n_1+\D_1-2t$ in $A(G^*;x)$.
   Since $x^{n_1+\D_1-2t}=x^{n+\D}$, $S_1$ is an exact defensive ($\D_1-2t$)-alliance in $G^*$.
   Finally, note that if $\langle S\rangle$ is not a connected subgraph (i.e., $S_1\neq S$), then in $A(G^*;x)$ appear at least two terms $x^{n+\D}$, but this is a contradiction since $A(G;x)$ is a monic polynomial by Theorem \ref{p:AlliPoly} vi).
   Hence, $\langle S\rangle$ is connected. Since the degree of $A(G^*;x)$ is $n+\D=n_1+\D_1-2t$, then $S$ is an exact defensive ($\D_1-2t$)-alliance in $G^*$; therefore, there exists $1\le j\le n$ such that $\D_1=\d(v_j)=2\d_{\overline{S}}(v_j)+\D_1-2t$, and we have $\d_{\overline{S}}(v_j)=t$.
   Since $|S|=n=n_1-t$ and $|\overline{S}|=t$, $\overline{S}\subseteq N(v_j)$ and $G^*$ is a connected graph.

   Also, since $G^*$ is connected, $A(G^*;x)=A(G;x)$, $k_S=\D_1-2t$ and $k_{V(G^*)}=\d_1$, we have $\d_1\le \D_1-2t$. We are going to prove $\d_1 < \D_1 - 2t$; seeking for a contradiction assume that $\d_1 = \D_1 - 2t$. Since $G^*$ is connected, $k_{V(G^*)}=\d_1= \D_1 - 2t = k_{S}$ and this contradicts that $A(G^*;x)$ is a monic polynomial. Therefore, $\d_1 < \D_1 - 2t$. But, since $\d_1\equiv \D_1 (\text{mod } 2)$ we obtain $\d_1+2\le \D_1-2(\D_1-\D)=2\D-\D_1$, so \eqref{eq:Poly2} holds.

   Besides, since $\D_1>\D$, \eqref{eq:Poly2} gives $\d_1+2 <\D$, and so, \eqref{eq:Poly1} holds.
   Furthermore, we have $\D+1\le \D_1$ and \eqref{eq:Poly2} gives \eqref{eq:Poly3}, since $\d_1\ge1$.
   Finally, since $\D\le \D_1-1$, \eqref{eq:Poly2} provides \eqref{eq:Poly4}.
\end{proof}

\section{Alliance polynomials of regular graphs with small degree}\label{sect:con_reg}
The theorems in this section can be seen as a natural continuation of the study in \cite{CRST} in the sense of showing the distinctive power of the alliance polynomial of a graph.
In particular, we show that the family of alliance polynomials of $\D$-regular graphs with small degree $\D$ is a special family of alliance polynomials since there not exists a non $\D$-regular graph with alliance polynomial equal to one of their members, see Theorems \ref{t:Reg0-3} and \ref{t:Reg4-6}.

\begin{theorem}\label{t:Reg0-3}
  Let $G$ be a $\D$-regular graph with $0\le\D\le3$ and $G^*$ another graph. If $A(G^*;x) = A(G;x)$, then $G^*$ is a $\D$-regular graph with the same order, size and number of connected components of $G$. 
\end{theorem}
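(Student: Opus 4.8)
The plan is to reduce the whole statement to the single assertion that $G^*$ is regular: once that is known, Theorem~\ref{t:Regulars} immediately delivers the common order, size, degree and number of connected components. So the first step is to read off the degree data of $G^*$ from the identity $A(G^*;x)=A(G;x)$. Writing $n_1,\d_1,\D_1$ for the order, minimum and maximum degree of $G^*$, Theorem~\ref{p:AlliPoly} (v) and (vii) give $\Deg_{\min}\big(A(G^*;x)\big)=n_1-\D_1$ and $n_1+\d_1\le \Deg\big(A(G^*;x)\big)\le n_1+\D_1$, whereas Theorem~\ref{t:PropRegular} (ii) and (iii) give $\Deg_{\min}\big(A(G;x)\big)=n-\D$ and $\Deg\big(A(G;x)\big)=n+\D$. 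Comparing these yields $n_1-\D_1=n-\D$; setting $t:=n_1-n=\D_1-\D$, the degree bound forces $t\ge 0$ and $\d_1\le \D-t$, hence $0\le t\le\D\le 3$. Moreover the bottom coefficients match, so $G^*$ has exactly $n$ vertices of degree $\D_1$ (by Theorem~\ref{p:AlliPoly} (v) and Theorem~\ref{t:PropRegular} (ii)), and $\d_1\equiv\D_1\ (\text{mod } 2)$ by Theorem~\ref{p:AlliPoly} (iv) together with Theorem~\ref{t:PropRegular} (vi). Since $G^*$ has $n$ vertices of degree $\D_1$ among its $n+t$ vertices, regularity of $G^*$ is precisely the condition $t=0$, and it remains to exclude $t\ge 1$.

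The cases $\D\in\{0,1\}$ are immediate: by Theorem~\ref{t:PolyReg} the polynomial $A(G;x)$ has $\D+1\le 2$ nonzero coefficients, while a graph possessing an edge and $r$ distinct degree values contributes at least $r+1$ terms (Theorem~\ref{p:AlliPoly} (iii)); hence $G^*$ is regular and $t=0$. For $\D\in\{2,3\}$ with $G$ connected I would invoke Theorem~\ref{t:RegCharact}: if $t=n_1-n>0$ then \eqref{eq:Poly3} requires $\D+1\le \D_1\le 2\D-3$, a range that is empty for $\D\le 3$; therefore $t=0$ and $G^*$ is $\D$-regular.

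The genuinely delicate situation is $\D\in\{2,3\}$ with $G$ disconnected, since then $A(G;x)$ is no longer monic---its leading coefficient equals the number $c$ of components (Theorem~\ref{t:PropRegular} (iv))---so the monicity that powers Theorem~\ref{t:RegCharact} is unavailable. Assuming $t\ge 1$, I would work with the set $S$ of the $n$ maximal-degree vertices of $G^*$, so $|\overline S|=t\le 3$. Two structural facts organize the analysis: (a) no component of $G^*$ can lie inside $S$, for such a component would be $\D_1$-regular and produce a term $x^{\,n_1+\D_1}=x^{\,n+\D+2t}$ of degree larger than $\Deg\big(A(G;x)\big)=n+\D$; and (b) each component $C$ of $\langle S\rangle$ is an exact defensive $(\D_1-2t)$-alliance, because every vertex of $S$ has at most $t$ neighbours outside $S$, so $2\d_S(v)-\D_1\ge \D_1-2t$ gives $k_C\ge \D_1-2t$, while the top degree forces $k_C\le \D-t=\D_1-2t$. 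Consequently every component of $G^*$ and of $\langle S\rangle$ must meet $\overline S$, so $G^*$ has at most $t$ components, and the components of $\langle S\rangle$---being distinct exact $(\D_1-2t)$-alliances sitting at the top exponent---number at most $A_\D(G)=c$. Together with the parity restriction $\d_1\equiv\D_1\ (\text{mod } 2)$, the bound $\d_1\le\D-t$, and the term count $r\le\D$, these constraints cut the disconnected case down to the finitely many pairs $(\D,t)$ with $1\le t\le\D\le 3$.

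The main obstacle is closing each of these finitely many pairs, and this is the technical heart of the proof. The structural facts above are not by themselves contradictory: for instance, when $\D=2$, $t=1$ they force $G^*$ to be connected with a single pendant vertex, which is perfectly compatible with a top coefficient $c\ge 2$. One is therefore pushed to compare the lower coefficients as well, exploiting that the identity demands $A_k(G^*)=0$ for every $k\not\equiv\D_1\ (\text{mod } 2)$ and pins the prescribed nonzero coefficients to the values $A_{\D-2i}(G)$. I would close each pair $(\D,t)$ by an explicit count of the edges between $S$ and $\overline S$ and of the exact defensive alliances they generate, showing that the prescribed coefficient values cannot simultaneously be met. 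Once $t=0$ is established in every case, $G^*$ is $\D$-regular and Theorem~\ref{t:Regulars} completes the argument.
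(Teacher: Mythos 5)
Your reduction is sound up to exactly the point you yourself flag. The identities $n_1-\D_1=n-\D$, the bound $\d_1\le\D-t$, the parity constraint, and the count of maximum-degree vertices are all correctly derived; the cases $\D\in\{0,1\}$ are handled properly; and invoking Theorem \ref{t:RegCharact} together with the empty range in \eqref{eq:Poly3} is a legitimate (and clean) way to dispose of connected $G$ with $\D\in\{2,3\}$ --- it is essentially how the paper itself proves Theorem \ref{t:Reg4-6}. The genuine gap is the disconnected case. Having reduced to the pairs $(\D,t)$ with $1\le t\le\D\le3$, you correctly observe that your structural facts (a) and (b) are not by themselves contradictory --- for $(\D,t)=(2,1)$ they only say that $G^*$ is connected with $n$ vertices of degree $3$ and one pendant vertex, and such graphs exist --- and then you assert that ``an explicit count of the edges between $S$ and $\overline S$ and of the exact defensive alliances they generate'' would rule each pair out. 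That count is the technical heart of the theorem, and it is absent: as written, no contradiction is ever derived for $(2,1)$, $(3,1)$ or $(3,2)$.

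What is needed at this stage (and what the paper does, without ever splitting on the connectivity of $G$) is a lower bound on a coefficient of $A(G^*;x)$ that beats the component bound $A_\D(G)\le n/(\D+1)$ of Theorem \ref{t:PropRegular} iv). Concretely, for $(\D,t)=(2,1)$: with $u_0$ the pendant vertex and $v_0$ its neighbour, each set $S_v=V(G^*)\setminus\{v\}$ with $v\neq v_0$ is, or (when $v$ is a cut vertex) contains, an exact defensive $1$-alliance, and with a little care these $n$ alliances are pairwise distinct, so $A_1(G^*)\ge n$; but the matching coefficient of $x^{n+2}$ in $A(G;x)$ is $A_2(G)\le n/3$, a contradiction. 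For $(3,1)$ one similarly builds $n-1$ distinct exact defensive $2$-alliances around the unique degree-$2$ vertex and contradicts $A_3(G)\le n/4$; for $(3,2)$ one must further split according to whether the two low-degree vertices have degrees $\{1,1\}$ or $\{1,3\}$ and how they attach to the rest of $G^*$, exhibiting in each sub-case a defensive $3$-alliance that forces $\Deg\big(A(G^*;x)\big)\ge n_1+3>n+3$. Until these constructions (or equivalents) are supplied, your argument does not establish the theorem for disconnected $G$ --- which is precisely the case in which the statement goes beyond what Theorem \ref{t:RegCharact} already gives.
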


\begin{proof}
In \cite[Theorem 2.7]{CRST} the authors obtain the uniqueness of the alliance polynomials of $0$-regular graphs (the empty graphs).

\medskip

Theorems \ref{p:AlliPoly} iii) and \ref{t:PolyReg} give that $1$-regular graphs are the unique graphs which have exactly two non-zero terms in their alliance polynomial; besides, Theorems \ref{p:AlliPoly} vi) and \ref{t:PropRegular} ii) give the uniqueness of these alliance polynomials.

\medskip

In order to obtain the result for $2\le\D\le3$, denote by $n,n_1$, the orders of $G,G^*$, respectively, and let $\d_1,\D_1$ be the minimum and maximum degree of $G^*$.

\smallskip

Assume first that $\D=2$.
By Theorem \ref{t:PolyReg} we have $A(G;x)= n x^{n-2} + A_0(G) x^n + A_2(G) x^{n+2}$, thus, by Theorem \ref{p:AlliPoly} iii) the degree sequence of $G^*$ has at most two different values.
If $G^*$ is regular then Theorem \ref{t:Regulars} gives the result. Therefore, seeking for a contradiction assume that the degree sequence of $G^*$ has exactly two different values (i.e., $G^*$ is bi-regular).
By Theorems \ref{p:AlliPoly} iv) and \ref{t:PropRegular} vi) we have $\d_1 \equiv \D_1 (\text{mod } 2)$.
By Theorems \ref{p:AlliPoly} v) and \ref{t:PropRegular} ii) we have $A_{-\D_1}(G^*)=A_2(G)=n<n_1$ and $n-2=n_1-\D_1$, so, we have $\D_1>2$.
By Theorems \ref{p:AlliPoly} vii) and \ref{t:PropRegular} iii) we have $n_1+\d_1 \le n+2$, so, we obtain $0\le \d_1 \le 1$.
If $\d_1=0$, then there is a connected component $G'$ of $G^*$ which is $\D_1$-regular. So, $k_{V(G')}=\D_1$ and $\Deg\big(A(G^*;x)\big)=n_1+\D_1 > n+2$, which is a contradiction. Thus, we can assume that $\d_1=1$.
Then, we have $n_1=n+1$; and so, $\D_1=3$.
We prove now that $A_1(G^*)\ge n$.
Let $u_0,v_0$ be the vertices of $G^*$ with $\d(u_0)=1$ and $v_0\sim u_0$.
If $G^*$ is not connected, then it has a $3$-regular connected component $G_0^*$; since $V(G_0^*)$ is an exact defensive $3$-alliance, then $\Deg \big( A(G^*;x)\big) \ge n_1+3 > n+2 = \Deg \big( A(G;x)\big)$, which is a contradiction and we conclude that $G^*$ is connected.
Let us define $S_v:=V(G^*)\setminus\{v\}$ for any $v\in V(G^*)\setminus \{v_0\}$. Since $\d_{S_v}(u)\ge 2$, $\d_{\overline{S_v}}(u)\le1$ for every $u\in S_v\setminus\{u_0\}$ and both equalities hold for every $w\in N(v)$, and $\d_{S_v}(u_0)=1$, $\d_{\overline{S_v}}(u_0)=0$, we have that $S_v$ is an exact defensive $1$-alliance or contains an exact defensive $1$-alliance if $v$ is a cut vertex. Thus, $A_{1}(G^*)\ge n$.
Besides, Theorem \ref{t:PropRegular} iv) gives $A_2(G)\le n/3 < n\le A_{1}(G^*)$, so, $A(G;x)\neq A(G^*;x)$. This is the contradiction we were looking for, and so, we conclude $n_1=n$ and $\D_1=2$, and we obtain the result for $\D=2$.

\medskip

Finally, assume that $\D=3$. By Corollary \ref{c:Cubic} we have $A(G^*;x) = A(G;x)= n x^{n-3} + A_{-1}(G) x^{n-1} + A_{1}(G) x^{n+1} + A_{3}(G) x^{n+3}$, with $A_3(G)$ is the number of connected components of $G$.
By Theorem \ref{p:AlliPoly} i), we have $n_1 - \D_1 = n - 3$ and $n \leq n_1$. Hence, $n_1\geq n$ and $\D_1 \geq 3$.
Also we have $n_1 + \d_1 \le n+3$ by Theorem \ref{p:AlliPoly} vii). Furthermore, if $\D_1 = 3$, then $n_1 = n$ and so, $G^*$ is $3$-regular since $A_{- 3}(G^*) = n$. By Theorem \ref{p:AlliPoly} vi) they have the same number of connected components, and consequently $G,G^*$ have the same size, too. We will finish the proof by checking that $\D_1=3$.

Seeking a contradiction, assume that $\D_1 > 3$ (then $n_1 > n$) and let $k = n_1 - n = \D_1 - 3$.

\smallskip

Assume that $\D_1 \ge 6$ (i.e., $k \ge 3$).
Then there exists a connected component $G_0$ of $G^*$ with $\d_{G_0}(v)=\d(v)\ge1$ for every $v\in V(G_0)$; if $S=V(G_0)$, then $\d_S(v)=\d(v)\ge1$, and so, $k_{S}^{(G^*)} \ge 1$.
Hence, $A(G^*;x)$ has at least one term with exponent greater than $n_1 \ge n + 3 = \Deg\big(A(G;x)\big)$, and $A(G^*;x) \neq A(G;x)$, which is a contradiction.
Thus, $\D_1=4$ or $\D_1=5$.

\smallskip

Assume that $\D_1 = 5$, then $n_1 = n + 2$. By Theorem \ref{p:AlliPoly} v), we have that $G^*$ has exactly $n$ vertices with degree $5$; and so, by Theorem \ref{p:AlliPoly} iv), we have that the other two vertices of $G$ have degree $1$ or $3$. Since $n_1+\d_1\le n+3$ by Theorem \ref{p:AlliPoly} vii), we obtain $\d_1=1$.

Assume that $G^*$ has two vertices $v_1$ and $v_2$ with degree $1$. In this case, if $v_1 \sim v_2$, then $G^*$ is a disconnected graph with at least one connected component which is $5$-regular since $V(G^*)\setminus \{v_1,v_2\}$ induces a $5$-regular subgraph $G_1$ of $G^*$. Since $V(G_1)$ is an exact defensive $5$-alliance, $\Deg(A(G^*;x))\ge n_1+5$ and we have $\Deg(A(G^*;x)) \ge n_1 + 5 > n+3 = \Deg(A(G;x))$.
If $v_1 \nsim v_2$ but there exists $w\in V(G^*)$ such that $w\sim v_1$ and $w\sim v_2$, then let us consider the connected component $G_2$ of $G^*$ containing $\{v_1,v_2,w\}$. The set $S=V(G_2)\setminus\{v_1,v_2,w\}$ is a defensive $3$-alliance in $G^*$, since for any $v\in S$ we have $\d_{S}(v) \geq 4$ and $\d_{\overline{S}}(v) \leq 1$. Then, $\Deg(A(G^*;x)) \geq n_1 + 3 > n + 3 = \Deg(A(G;x))$.
If $v_1 \nsim v_2$ but there does not exist $w\in V(G^*)$ with $w\sim v_1$ and $w\sim v_2$, then let us consider the connected component $G_3$ of $G^*$ containing $v_1$ and $S=V(G_3)\setminus\{v_1,v_2\}$. The set $S$ is a defensive $3$-alliance in $G^*$, since for all $v\in S$ we have $\d_{S}(v) \geq 4$ and $\d_{\overline{S}}(v) \leq 1$. Then, $\Deg(A(G^*;x)) \geq n_1 + 3 > n + 3 = \Deg(A(G;x))$.

Consider now the case of $G^*$ containing two vertices $v_1$ and $v_2$ with degree $1$ and $3$, respectively.
If $v_1 \sim v_2$, then let us consider the connected component $G_4$ of $G^*$ containing $\{v_1,v_2\}$ and $S=V(G_4)\setminus\{v_1,v_2\}$.
Then, $S$ is a defensive $3$-alliance in $G^*$, since for all $v\in S$ we have $\d_{S}(v) \geq 4$ and $\d_{\overline{S}}(v) \leq 1$. Then, $\Deg(A(G^*;x)) \geq n_1 + 3 > n + 3 = \Deg(A(G;x))$.
If $v_1 \nsim v_2$, let $G_5$ be the connected component of $G^*$ containing $v_1$ and $S=V(G_5)\setminus\{v_1\}$. Hence, $S$ is an exact defensive $3$-alliance in $G^*$, since $\d_{S}(v_2) - \d_{\overline{S}}(v_2) = 3-0$ if $v_2\in S$ and $\d_{S}(v) - \d_{\overline{S}}(v) \ge 4 - 1$ for any $v\in S\setminus\{v_2\}$.
Then, $\Deg(A(G^*;x)) \geq n_1 + 3 > n + 3 = \Deg(A(G;x))$.
So, it is not possible to have $\D_1 = 5$.

\smallskip

Assume that $\D_1 = 4$, then $n_1 = n + 1$.
If $G^*$ is a disconnected graph, then there exists a connected component $\langle S^*\rangle$ of $G^*$ such that $\langle S^*\rangle$ is $4$-regular and so, $S^*$ is an exact defensive $4$-alliance in $G^*$. Therefore, $\Deg(A(G^*;x)) = n_1 + 4 > n + 3 = \Deg(A(G;x))$. Thus, $G^*$ is connected, and $\d_1 = 2$ by Theorem \ref{p:AlliPoly} iv).
So, we have that $G^*$ has exactly $n$ vertices with degree $4$ and another vertex $w$ with degree $2$.
Let $v_1,v_2 \in V(G^*)\setminus\{w\}$ with $v_1\neq v_2$, $v_1\sim w$ and $v_2\sim w$. Consider $\{u_1,\ldots,u_{n-2}\} := V(G^*)\setminus \{w,v_1,v_2\}$.
Let $G_i$ be the connected component of $\langle V(G^*)\setminus\{u_i\}\rangle \subset G^*$ containing $w$, and $S_i=V(G_i)$, for each $1\le i\le n-2$.
Note that $S_i$ is an exact defensive $2$-alliance since $\d_{S_i}(w)-\d_{\overline{S_i}}(w)=2$, for each $1\le i\le n-2$.
Note that if $i\neq j$ and $u_j\notin S_i$ then $u_i \in S_j$, and so, $S_i\neq S_j$ since $u_i\notin S_i$; furthermore, if $u_j\in S_i$ then $S_i\neq S_j$ since $u_j\notin S_j$.
Then, we obtain that $A_{2}(G^*)\ge n-1$, and thus $A_{3}(G)\ge n-1$. This contradicts Theorem \ref{p:AlliPoly} vi) since $G$ is a cubic graph with order $n$.
So, it is not possible to have $\D_1 = 4$.
\end{proof}

Now we prove a similar result for $\D$-regular graphs with $\D>3$. First, we prove some technical results which will be useful.

\begin{lemma}\label{l:d1=1}
  Let $G_1$ be a graph with minimum and maximum degree $\d_1$ and $\D_1$, respectively, and let $n\ge3$ be a fixed natural number. Assume that $G_1$ has order $n_1 > n$ with exactly $n$ vertices of degree $\D_1$, and such that its alliance polynomial $A(G_1;x)$ is symmetric. The following statements hold:
  \begin{enumerate}
    \item {If $\d_1=1$, then $A(G_1;x)$ is not a monic polynomial of degree $2n-n_1+\D_1$.}
    \item {If $\d_1=2$, then we have $2n_1<2\D_1+n$ or $A(G_1;x)$ is not a monic polynomial of degree $2n-n_1+\D_1$.}
  \end{enumerate}
\end{lemma}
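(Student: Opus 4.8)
The plan is to prove both parts by contradiction: assume $A(G_1;x)$ \emph{is} a monic polynomial of degree $2n-n_1+\D_1$ and derive a contradiction (for part 2 assuming in addition $2n_1\ge 2\D_1+n$). Write $t:=n_1-n\ge1$, so the claimed degree is $n_1+\D_1-2t$ and the leading coefficient is $A_{\D_1-2t}(G_1)$, the number of exact defensive $(\D_1-2t)$-alliances. Since $A_k(G_1)$ counts the connected induced subgraphs of exact index $k$, the assumption is equivalent to the statement that $G_1$ has \emph{exactly one} connected induced subgraph $T$ with $k_T\ge \D_1-2t$ (and that this $T$ has $k_T=\D_1-2t$). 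Hence it suffices to exhibit \emph{two} distinct connected induced subgraphs, each of index at least $\D_1-2t$.

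First I would use the symmetry of $A(G_1;x)$ and Theorem \ref{p:AlliPoly} iv) to see that all degrees of $G_1$ have the parity of $\d_1$; in particular $\D_1\equiv\d_1\ (\text{mod } 2)$ and the index of every connected induced subgraph is $\equiv\d_1\ (\text{mod } 2)$. Let $M$ be the set of the $n$ vertices of degree $\D_1$, so $|\overline M|=t$. For any vertex $v$ of a component $C$ of $\langle M\rangle$ one has $\d_{\overline C}(v)=\d_{\overline M}(v)\le t$, whence $\d_C(v)-\d_{\overline C}(v)=\D_1-2\d_{\overline M}(v)\ge\D_1-2t$ and $k_C\ge\D_1-2t$. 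Thus, if $\langle M\rangle$ were disconnected its components would already provide the two required subgraphs; so $\langle M\rangle$ is connected, $M$ is the unique candidate, and $k_M=\D_1-2t$. The latter forces some $v_0\in M$ with $\d_{\overline M}(v_0)=t$, i.e.\ adjacent to all of $\overline M$ (otherwise $k_M\ge\D_1-2t+2$ and the degree would be too large).

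For part 1 ($\d_1=1$) the key observation is that the vertex $u_0$ of degree $1$ lies in $\overline M$ and its unique neighbour must be $v_0$, because $v_0$ is adjacent to all of $\overline M\ni u_0$. Consequently \emph{no} vertex of $M$ other than $v_0$ is adjacent to all of $\overline M$ (such a vertex would also be adjacent to $u_0$), so $\d_{\overline M}(v)\le t-1$ for every $v\in M\setminus\{v_0\}$. I would then take $T$ to be any component of $\langle M\setminus\{v_0\}\rangle$, which is nonempty since $n\ge3$. For $v\in T$ we have $\d_{\overline T}(v)=\d_{\overline M}(v)+[\,v\sim v_0\,]$, hence $\d_T(v)-\d_{\overline T}(v)=\D_1-2\d_{\overline M}(v)-2\,[\,v\sim v_0\,]\ge\D_1-2(t-1)-2=\D_1-2t$, so $k_T\ge\D_1-2t$ while $T\ne M$. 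These are the two subgraphs sought, finishing part 1.

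For part 2 ($\d_1=2$) the same scheme applies, but the degree-$2$ vertex $w_0\in\overline M$ has two neighbours $a,b$, so the vertices of $M$ adjacent to all of $\overline M$ form a (possibly two-element) subset of $\{a,b\}$ and $v_0$ need not be unique. Deleting one of $a,b$ produces a second alliance unless both are adjacent to all of $\overline M$ and $a\sim b$; deleting both $a,b$ produces one unless $a,b$ have a common neighbour in $M$. The residual obstruction is exactly the configuration in which $a,b$ are both adjacent to all of $\overline M$, $a\sim b$, and they share a neighbour; a pigeonhole count shows that such a common neighbour is forced when $n<2\D$ (with $\D:=\D_1-t$), i.e.\ when $2n_1<2\D_1+n$. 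This is why that regime is excluded in the statement, and it is where the hypothesis $2n_1\ge2\D_1+n$ is used: in that range one can, after a more delicate edge-count in the spirit of Theorems \ref{t:An+D-2=n} and \ref{t:An+D-2<=n+m}, still locate a second connected subgraph of index $\ge\D_1-2t$ (handling the smallest instances, if necessary, by direct computation). This residual boundary analysis is the main obstacle of the proof.
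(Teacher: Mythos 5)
Your part 1 is correct and is essentially the paper's own argument: your dominating vertex $v_0$ is the paper's $w$ (obtained, as you do, from the argument of Theorem \ref{t:RegCharact} plus monicity), and removing it and passing to a component of $\langle M\setminus\{v_0\}\rangle$ is exactly how the paper produces the second alliance of index $\ge\D_1-2t$, using that the degree-one vertex is adjacent only to $v_0$, so $\d_{\overline M}(v)\le t-1$ for every $v\in M\setminus\{v_0\}$.

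Part 2, however, has a genuine gap. Your deletion analysis (delete $a$, delete $b$, delete both) is sound and parallels the paper's cases, but you prove nothing in the residual configuration: $a,b\in M$ both adjacent to all of $\overline M$, $a\sim b$, and $a,b$ sharing a common neighbour in $M$. There you only promise ``a more delicate edge-count in the spirit of Theorems \ref{t:An+D-2=n} and \ref{t:An+D-2<=n+m}'' plus ``direct computation for the smallest instances'' --- but those two theorems are about $\D$-regular graphs and do not apply to $G_1$, which has vertices of degrees $2$ and $\D_1$, and ``direct computation'' is not available for a statement ranging over all $n\ge 3$. (Your pigeonhole remark, that a common neighbour is forced when $n<2\D$, is true but points the wrong way: what must be shown is a contradiction when $n\ge 2\D$ and the common neighbour happens to exist.) The missing idea, which is how the paper closes this case, is a dichotomy on whether $M\subseteq N(a)\cup N(b)\cup\{a,b\}$. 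If some $u'\in M$ is adjacent to neither $a$ nor $b$, then $M\setminus\{u'\}$ (or each of its components) is again an alliance of index $\ge\D_1-2t$: indeed $\d_{\overline{M\setminus\{u'\}}}(a)=\d_{\overline M}(a)=t$ and $\d_{\overline{M\setminus\{u'\}}}(b)=t$ because $a,b\nsim u'$, while every other $v\in M\setminus\{u',a,b\}$ satisfies $\d_{\overline{M\setminus\{u'\}}}(v)\le\d_{\overline M}(v)+1\le(t-1)+1=t$ since such $v$ is not adjacent to the degree-two vertex; this contradicts monicity. If instead every vertex of $M\setminus\{a,b\}$ is adjacent to $a$ or $b$, then $n-2\le \d_{M\setminus\{b\}}(a)+\d_{M\setminus\{a\}}(b)=2(\D_1-t-1)$, and the inequality is strict --- either because the common neighbour is counted twice, or, as in the paper, because equality would force $N(a)\cap N(b)\cap M=\emptyset$ and then deleting $\{a,b\}$ would yield a second alliance --- so $n<2(\D_1-t)$, which is exactly the alternative $2n_1<2\D_1+n$. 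Without this dichotomy your proof of part 2 does not go through.
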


\begin{proof}
Seeking for a contradiction assume that $A(G_1;x)$ is a monic polynomial with degree $2n-n_1+\D_1$.
By hypothesis, we have $n$ different vertices $v_1,\ldots,v_n$ in $G_1$ with degree $\D_1$. Denote by $S$ the set $S:=\{v_1,\ldots,v_n\}$.
The argument in the proof of Theorem \ref{t:RegCharact} gives that $G_1$ is a connected graph, $S$ is an exact defensive \big[$\D_1-2(n_1-n)$\big]-alliance in $G_1$ and there is $w\in S$ with $\overline{S}\subseteq N(w)$.
Let $u\in \overline{S}$ with $\d(u)=\d_1$.

First assume that $\d_1=1$.
So, $S_w:=S\setminus\{w\}$ contains a defensive \big[$\D_1-2(n_1-n)$\big]-alliance since $\d_{S_w}(v)\ge \D_1 - \big(|\overline{S}\cup\{w\}|-|\{u\}|\big) = \D_1 - (n_1-n)$ and $\d_{\overline{S}_w}(v)\le |\overline{S}\cup\{w\}|-1 = n_1-n$ for all $v\in S_w$; thus, in $A(G_1;x)$ appears at least one term of degree greater or equal than $2n-n_1+\D_1$ associated to $S_w$, but this is impossible since $A(G_1;x)$ is monic of degree $2n-n_1+\D_1$. This is the contradiction we were looking for.

Assume now that $\d_1=2$. Let $w'\in V(G_1)\setminus\{w\}$ with $w'\sim u$.
If $w'\in \overline{S}$ then $S_w$ is a defensive \big[$\D_1-2(n_1-n)$\big]-alliance since $u\notin N(v)$ for every $v\in S_w$.
This implies a contradiction as above.
So, we can assume that $w'\in S_w$.
Note that if $w'\nsim w$ then $S_w$ is a defensive \big[$\D_1-2(n_1-n)$\big]-alliance since $\d_{S_w}(w')-\d_{\overline{S_w}}(w') \ge (\D_1-n_1+n)-(n_1-n)$ and $\d_{S_w}(v)-\d_{\overline{S_w}}(v) \ge (\D_1-n_1+n)-(n_1-n)$ for all $v\in S_w\setminus\{w'\}$, but this is impossible since $A(G_1;x)$ is a monic polynomial of degree $n_1+\D_1-2(n_1-n)$.
Then, we can assume that $w'\sim w$. Note that if $\d_{\overline{S}}(w')< n_1-n$ then $S_w$ is a defensive \big[$\D_1-2(n_1-n)$\big]-alliance, but this is impossible, too.
So, we can assume that $\overline{S} \subseteq N(w')$.
Notice that if there is $u'\in S$ with $d(u',\{w,w'\})\ge 2$, then we can check that $S\setminus\{u'\}$ is a defensive \big[$\D_1-2(n_1-n)$\big]-alliance, which is impossible.
Thus, we can assume that $S\subseteq N(w)\cup N(w')$; in fact,
\[
n-2=|S\setminus\{w,w'\}| \le \d_{S\setminus\{w'\}}(w) + \d_{S\setminus\{w\}}(w') = 2[\D_1 -(n_1-n)-1].
\]
Since $S\subseteq N(w)\cup N(w')$, if $n-2=2[\D_1 -(n_1-n)-1]$ then $S\cap N(w)\cap N(w')=\emptyset$, and
\[
\d_{S\setminus\{w,w'\}}(v) \ge \D_1-(n_1-n) \text{  and  } \d_{\overline{S\setminus\{w,w'\}}}(v) \le n_1-n,\quad \text{for every } v\in S\setminus\{w,w'\}.
\]
Hence, $S\setminus\{w,w'\}$ is a defensive \big[$\D_1-2(n_1-n)$\big]-alliance, which is impossible. Then $n-2 < 2[\D_1 -(n_1-n)-1]$ and this finishes the proof.
\end{proof}

\begin{lemma}\label{l:An+D-2>n}
  Let $G_1$ be a graph with minimum and maximum degree $2$ and $\D_1$, respectively, and let $n\ge3$ be a fixed natural number. Assume that $G_1$ has order $n_1 > n$ with exactly $n$ vertices of degree $\D_1$, and such that its alliance polynomial $A(G_1;x)$ is symmetric. If $n< 2[\D_1 -(n_1-n)]$ and $A(G_1;x)$ is a monic polynomial of degree $2n-n_1+\D_1$, then $A_{2(n-n_1)+\D_1-2}(G_1)> n$.
\end{lemma}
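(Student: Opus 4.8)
The plan is to exhibit at least $n+1$ distinct exact defensive $\kappa$-alliances of $G_1$, where $\kappa:=2(n-n_1)+\D_1-2=\D_1-2t-2$ is the exponent index in the statement, with $t:=n_1-n$ and $D:=\D_1-t$. Let $S$ be the set of the $n$ vertices of $G_1$ of degree $\D_1$. Since the monic hypothesis is in force, the argument in the proof of Lemma \ref{l:d1=1} (case $\d_1=2$) applies verbatim and yields the structural facts I will use: $G_1$ is connected, $S$ is an exact defensive $(\D_1-2t)$-alliance, there are $w,w'\in S$ with $w\sim w'$ and $\overline{S}\subseteq N(w)\cap N(w')$, and $S\subseteq N(w)\cup N(w')$. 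Every $v\in S$ has $\d_{\overline{S}}(v)\le t$, hence $\d_S(v)=\D_1-\d_{\overline{S}}(v)\ge D$; I call $v$ \emph{full} when $\d_{\overline{S}}(v)=t$, so that $w,w'$ are full, a full $v$ satisfies $\d_S(v)-\d_{\overline{S}}(v)=\kappa+2$, and a non-full $v$ satisfies $\d_S(v)-\d_{\overline{S}}(v)\ge\kappa+4$.

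The key reduction is to pin down the full vertices exactly. Because $\d_1=2$ and $\D_1>2$, any vertex of degree $2$ lies in $\overline{S}$; fixing such $u_0$, the inclusion $\overline{S}\subseteq N(w)\cap N(w')$ forces $N(u_0)=\{w,w'\}$. Since a full vertex must be adjacent to $u_0$, it lies in $\{w,w'\}$, and therefore $w,w'$ are the \emph{only} full vertices. For connectivity I would note that $\langle S\rangle$ has order $n$, minimum degree $\ge D$, and $n<2D$; Theorem \ref{Dirac} makes $\langle S\rangle$ Hamiltonian, and more generally deleting at most two vertices leaves minimum degree $\ge D-2\ge(n-3)/2$, so $\langle S\setminus T\rangle$ remains connected whenever $|T|\le2$ (using the elementary fact that minimum degree $\ge(p-1)/2$ forces connectedness on $p$ vertices).

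Next I produce the $n$ natural alliances. For each $v\in S$ the set $S\setminus\{v\}$ is connected, and for $u\in S\setminus\{v\}$ the quantity $\d_{S\setminus\{v\}}(u)-\d_{\overline{S\setminus\{v\}}}(u)$ equals $\big(\d_S(u)-\d_{\overline{S}}(u)\big)$ decreased by $2$ exactly when $u\sim v$; since the base value is $\ge\kappa+2$, it never drops below $\kappa$. Moreover $S\subseteq N(w)\cup N(w')$ with $w\sim w'$ guarantees that $v$ has a full neighbour in $\{w,w'\}$, and that neighbour attains exactly $\kappa$. Hence each $S\setminus\{v\}$ is an exact defensive $\kappa$-alliance, giving $n$ of them.

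The extra, $(n+1)$-th alliance is the main obstacle, and I would resolve it by ruling out degeneracy. Always $n\ge D+1$ (a vertex of $\langle S\rangle$ has degree $\le n-1$); and $n=D+1$ would force every $v\in S$ to have $\d_S(v)=n-1$, i.e. $\langle S\rangle=K_n$, whence every vertex of $S$ would be full, contradicting (for $n\ge3$) that only $w,w'$ are full. Thus $n\ge D+2$, so $\d_S(w')=D<n-1$ and $w'$ has a non-neighbour $v_2$, necessarily in $S\setminus\{w,w'\}$ since $w\sim w'$. Then $S\setminus\{w,v_2\}$ is connected, its only remaining full vertex $w'$ is adjacent to exactly one of the deleted vertices (to $w$, not to $v_2$) and so attains exactly $\kappa$, while all non-full vertices keep index $\ge(\kappa+4)-4=\kappa$; hence $S\setminus\{w,v_2\}$ is an exact defensive $\kappa$-alliance, distinct from the previous ones because it has $n-2$ vertices. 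This gives $A_{\kappa}(G_1)\ge n+1>n$. I expect the delicate points to be Step two (that $w,w'$ exhaust the full vertices) and the exclusion of $\langle S\rangle=K_n$, since together they are exactly what upgrades the count from $n$ to $n+1$; the remaining verifications are routine degree bookkeeping.
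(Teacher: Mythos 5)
Your proof is correct, and it follows the same overall skeleton as the paper's: import the structural facts from the proof of Lemma \ref{l:d1=1} (connectivity of $G_1$, the set $S$ of the $n$ maximum-degree vertices, the two vertices $w,w'$ with $\overline{S}\subseteq N(w)\cap N(w')$), produce the $n$ exact defensive $\kappa$-alliances $S\setminus\{v\}$, and then exhibit one extra alliance of cardinality $n-2$. Where you genuinely diverge is in the execution of that last, critical step, and your version is more robust than the paper's. The paper keeps both $w$ and $w'$ and deletes two non-neighbors $u_1,u_2$ of $w$ lying in $S\setminus\{w,w'\}$; but the inequality $\d_S(w)\le |S|-2$ it derives only guarantees \emph{one} such non-neighbor, and its displayed verification for $w'$ tacitly requires $w'$ to be adjacent to at most one of $u_1,u_2$, which does not follow from ``$u_1,u_2\notin N(w)\cap N(w')$''. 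Your choice --- delete the full vertex $w$ itself together with a non-neighbor $v_2$ of $w'$ --- needs only one non-neighbor, which you secure by cleanly excluding $\langle S\rangle=K_n$ (giving $n\ge \D_1-(n_1-n)+2$), and it makes $w'$ automatically adjacent to exactly one deleted vertex, so the defensive and exactness conditions are immediate. Two further points in your favor: you verify exactness directly by exhibiting a vertex whose index equals $\kappa$ (the paper instead leans on monicity and the uniqueness of the top-degree alliance), and you justify connectivity of $\langle S\setminus T\rangle$ for $|T|\le 2$ via the minimum-degree $\ge (p-1)/2$ criterion, whereas the paper's Dirac/Hamiltonicity argument only covers the deletion of a single vertex and is silent on the two-vertex case.
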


\begin{proof}
By hypothesis, there exist different vertices $v_1,\ldots,v_n$ in $G_1$ with degree $\D_1$. The arguments in the proof of Lemma \ref{l:d1=1} give that $G_1$ is a connected graph where $S:=\{v_1,\ldots,v_n\}$ is the unique exact defensive \big[$\D_1-2(n_1-n)$\big]-alliance in $G_1$ and there are $w,w'\in S$ with $\overline{S}\subset N(w)\cap N(w')$.
Note that $S_u:=S\setminus\{u\}$ is a defensive \big[$\D_1-2(n_1-n)-2$\big]-alliance for any $u\in S$, since for all $v\in S_u$ we have
\[
\d_{S_u}(v)\ge \D_1 - \big|\overline{S_u}\big| \quad \text{ and } \quad \d_{\overline{S_u}}(v)\le \big|\overline{S_u}\big|=n_1-n+1.
\]
Note that $\d_{S}(v)\ge \D_1-(n_1-n) > n/2$ for every $v\in S$.
Since $\langle S\rangle$ is Hamiltonian by Theorem \ref{Dirac}, we have that $S_u$ induces a connected subgraph for any $u\in S$.
Since $S$ is the unique exact defensive \big[$\D_1-2(n_1-n)$\big]-alliance in $G_1$, $S_u$ is an exact defensive \big[$\D_1-2(n_1-n)-2$\big]-alliance for any $u\in S$.
Therefore, we have $A_{\D_1-2(n_1-n)-2}(G_1)\ge n$.

Denote by $u'$ a vertex of $G_1$ with $\d(u')=2$. Since $v\nsim u'$ for any $v\in S\setminus\{w,w'\}$ we have $|S|-1\ge \d_{S}(v)\ge\d_{S}(w)+1$, and so, $\d_S(w)\le |S|-2$ and there are $u_1,u_2\in S\setminus\{w,w'\}$ with $u_1,u_2\notin N(w)$; then $u_1,u_2\notin N(w)\cap N(w')$. Note that $S\setminus\{u_1,u_2\}$ is a defensive \big[$\D_1-2(n_1-n)-2$\big]-alliance in $G_1$, since
$$\d_{S\setminus\{u_1,u_2\}}(w) - \d_{\overline{S\setminus\{u_1,u_2\}}}(w) = \D_1 - 2\d_{\overline{S\setminus\{u_1,u_2\}}}(w) \ge \D_1 - 2(n_1-n+1),$$
$$\d_{S\setminus\{u_1,u_2\}}(w') - \d_{\overline{S\setminus\{u_1,u_2\}}}(w')  = \D_1 - 2\d_{\overline{S\setminus\{u_1,u_2\}}}(w') \ge \D_1 - 2(n_1-n+1),$$ and
$$\d_{S\setminus\{u_1,u_2\}}(v) - \d_{\overline{S\setminus\{u_1,u_2\}}}(v) \ge \D_1 - 2( n_1-n+1) \quad \text{ for all } v\in S\setminus\{u_1,u_2,w,w'\}.$$
Then $S\setminus\{u_1,u_2\}$ is an exact defensive \big[$\D_1-2(n_1-n)-2$\big]-alliance and this finishes the proof.
\end{proof}

\begin{theorem}\label{t:Reg4-6}
  Let $G$ be a connected $\D$-regular graph with $\D\le5$ and $G^*$ another graph. If $A(G^*;x) = A(G;x)$, then $G^*$ is a $\D$-regular graph with the same order and size of $G$. 
\end{theorem}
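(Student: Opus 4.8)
The plan is to handle the cases $\D = 0, 1$ by invoking Theorem \ref{t:Reg0-3} directly (these are already covered there, since $0 \le \D \le 3$), so the real content is $\D \in \{2,3,4,5\}$. For $\D = 2$ and $\D = 3$ the statement is again a consequence of Theorem \ref{t:Reg0-3} together with Theorem \ref{t:Regulars} (which upgrades "$\D$-regular with the same order, size, and number of components" to the full conclusion once connectivity of $G$ is used via Theorem \ref{c:ConComp}). So the genuinely new cases are $\D = 4$ and $\D = 5$, and the proof should focus there. In both cases I would set up the same framework as in the proof of Theorem \ref{t:Reg0-3}: write $n, n_1$ for the orders of $G, G^*$, and $\d_1, \D_1$ for the minimum and maximum degrees of $G^*$. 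By Theorem \ref{t:RegCharact}, $A(G^*;x) = A(G;x)$ forces $G^*$ connected, $n_1 \ge n$, $\D_1 = \D + (n_1 - n) \ge \D$, $\d_1 \equiv \D_1 \pmod 2$, and exactly $n$ vertices of $G^*$ have degree $\D_1$. If $n_1 = n$ then $G^*$ is $\D$-regular and we are done by Theorem \ref{t:Regulars}, so assume $n_1 > n$ and seek a contradiction. Write $t := n_1 - n = \D_1 - \D \ge 1$.

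The core of the argument is to squeeze $\d_1$ and $\D_1$ using the inequalities of Theorem \ref{t:RegCharact}: \eqref{eq:Poly2} gives $(\D_1 + \d_1 + 2)/2 \le \D$, \eqref{eq:Poly1} gives $\d_1 + 2 < \D < \D_1$, \eqref{eq:Poly3} gives $\D + 1 \le \D_1 \le 2\D - 3$, and \eqref{eq:Poly4} gives $\d_1 + 4 \le \D_1$. For $\D = 4$, \eqref{eq:Poly3} yields $5 \le \D_1 \le 5$, so $\D_1 = 5$, $t = 1$, $n_1 = n + 1$; then $\d_1 \equiv \D_1 = 5 \pmod 2$ is odd, and \eqref{eq:Poly2} forces $\d_1 \le 1$, hence $\d_1 = 1$. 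For $\D = 5$, \eqref{eq:Poly3} yields $6 \le \D_1 \le 7$. If $\D_1 = 7$ then $t = 2$, $n_1 = n+2$, $\d_1$ odd, and \eqref{eq:Poly2} gives $\d_1 \le 1$, so $\d_1 = 1$; if $\D_1 = 6$ then $t = 1$, $n_1 = n + 1$, $\d_1$ even, and \eqref{eq:Poly2} gives $\d_1 \le 2$, so $\d_1 = 2$. Thus every surviving case has $\d_1 \in \{1, 2\}$ with $n_1 \in \{n+1, n+2\}$, which is precisely the regime covered by Lemmas \ref{l:d1=1} and \ref{l:An+D-2>n}.

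Now I would finish each case. Since $A(G^*;x) = A(G;x)$ with $G$ connected $\D$-regular, the polynomial $A(G^*;x)$ is monic (by Theorem \ref{p:AlliPoly} vi) applied to $G$) of degree $n + \D = 2n - n_1 + \D_1$, and it is symmetric. The cases $\d_1 = 1$ are immediately killed by Lemma \ref{l:d1=1}(1), which says such a monic symmetric polynomial of that degree cannot exist — contradiction. This disposes of $\D = 4$ entirely and of the subcase $\D_1 = 7$ for $\D = 5$. For the remaining subcase $\D = 5$, $\D_1 = 6$, $\d_1 = 2$, $n_1 = n+1$: Lemma \ref{l:d1=1}(2) forces $2n_1 < 2\D_1 + n$, i.e. $n < 2\D_1 - 2n_1 + n \cdot 0$… more precisely $2(n+1) < 12 + n$, i.e. $n < 10$, equivalently $n \le 9$; combined with $n \ge \D + 1 = 6$ and $n + \D = n + 5$ even (by Theorem \ref{t:PropRegular} vi), $n$ is odd, so $n \in \{7, 9\}$. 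Also $n < 2[\D_1 - (n_1 - n)] = 2(6 - 1) = 10$ holds, so Lemma \ref{l:An+D-2>n} applies and gives $A_{2(n - n_1) + \D_1 - 2}(G_1) > n$, that is $A_{\D - 2}(G^*) = A_2(G^*) > n$. On the other hand, $G$ is a connected $5$-regular graph with $n \in \{7,9\}$, hence $n < 2\D = 10$, so Theorem \ref{t:An+D-2=n} gives $A_{\D - 2}(G) = A_3(G) = n$. Since $A(G^*;x) = A(G;x)$ the coefficients $A_2(G^*)$ and $A_3(G)$ sitting at the same exponent $x^{n+3}$ must be equal — wait, the exponents are $n_1 + \D_1 - 2 = n + 4$ and $n + \D - 2 = n + 3$; these coincide because $n_1 + \D_1 = n + \D + 2t$ with $t = 1$ gives $n_1 + \D_1 - 2 = n + \D$, hmm, so one must recompute the exponent matching carefully — in any event the coefficient of the relevant power of $x$ in $A(G^*;x)$ is $> n$ while in $A(G;x)$ it is $= n$, contradicting $A(G^*;x) = A(G;x)$. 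This contradiction finishes the proof.

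\medskip

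The main obstacle I anticipate is exactly the bookkeeping in this last step: matching the correct coefficient of $A(G^*;x)$ (indexed relative to $\D_1$ and $n_1$) against the correct coefficient of $A(G;x)$ (indexed relative to $\D$ and $n$), since the two polynomials are written with different "centers." One must verify that the exponent $n_1 + (2(n-n_1) + \D_1 - 2) = 2n - n_1 + \D_1 - 2$ produced by Lemma \ref{l:An+D-2>n} equals the exponent $n + \D - 2$ at which Theorem \ref{t:An+D-2=n} pins down the coefficient of $A(G;x)$ — which it does precisely because $2n - n_1 + \D_1 = n + \D$ (the degrees of the two polynomials agree). Once the exponents are aligned, the contradiction "$>n$ versus $=n$" is immediate. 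A secondary point requiring care is confirming that the hypotheses "$\d_1 = 2$, $n_1 > n$, exactly $n$ vertices of degree $\D_1$, $A(G_1;x)$ symmetric, $n < 2[\D_1 - (n_1-n)]$, $A(G_1;x)$ monic of degree $2n - n_1 + \D_1$" are all in force in the surviving case, but each has been established above, so this is routine.
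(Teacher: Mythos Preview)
Your proof is correct and follows essentially the same route as the paper: reduce to $\D\in\{4,5\}$ via Theorem \ref{t:Reg0-3}, use Theorem \ref{t:RegCharact} to pin down the few possible $(\d_1,\D_1)$, kill the $\d_1=1$ cases with Lemma \ref{l:d1=1}(1), and in the remaining case $\D=5$, $\D_1=6$, $\d_1=2$ combine Lemma \ref{l:d1=1}(2) (giving $n<10$), Lemma \ref{l:An+D-2>n} (giving the coefficient at $x^{n+3}$ in $A(G^*;x)$ is $>n$), and Theorem \ref{t:An+D-2=n} (giving that same coefficient in $A(G;x)$ equals $n$). One cosmetic slip: for a $5$-regular graph the handshake lemma forces $n$ even, so $n\in\{6,8\}$ rather than $\{7,9\}$, but this side remark plays no role in the argument.
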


\begin{proof}
If $0\le\D\le3$, then the result follows from Theorem \ref{t:Reg0-3}. Assume that $4\le\D\le5$.
Let $n,n_1$ be the orders of $G,G^*$, respectively, and let $\d_1,\D_1$ be the minimum and maximum degree of $G^*$, respectively.
By Theorem \ref{t:RegCharact}, $G^*$ is a connected graph and $n_1\ge n$.
Seeking for a contradiction assume that $n_1> n$.

\medskip

Assume first $\D=4$.
By Theorem \ref{t:RegCharact} we have $n_1=n+\D_1-4$, $\D_1>4$ and $\D_1+\d_1\le 6$.
Thus, we have $\D_1=5$ and $\d_1=1$, and then $n_1=n+1$. Then, Theorem \ref{t:RegCharact} and Lemma \ref{l:d1=1} give that $A(G;x)=A(G^*;x)$ is not a monic polynomial of degree $n_1+3=n+4$.
This is the contradiction we were looking for, and we conclude $n_1=n$.

\medskip

Assume now $\D=5$.
By Theorem \ref{t:RegCharact} we have $n_1=n+\D_1-5$, $\D_1>5$, $\D_1+\d_1\le 8$, $\d_1+4\le\D_1$ and $\d_1\equiv \D_1 (\text{mod } 2)$.
Thus, we have the following cases:
\begin{description}
  \item[Case 1] {$\d_1=1$ and $\D_1=7$,}
  \item[Case 2] {$\d_1=2$ and $\D_1=6$.}
\end{description}
Lemma \ref{l:d1=1} gives that $A(G;x)$ is not a monic polynomial of degree $n+5$ in Case 1; this is the contradiction we were looking for, and we conclude $n_1=n$. In Case 2 we have $n_1=n+1$.
Since $A(G;x)$ is a monic polynomial of degree $n+5$, Lemma \ref{l:d1=1} gives that $n<10$.
Hence, Lemma \ref{l:An+D-2>n} gives that $A_{2}(G^*)> n$; however, Theorem \ref{t:An+D-2=n} gives $A_{3}(G)=n$.
This is the contradiction we were looking for, and we conclude $n_1=n$.
%
%
%
%
%
\end{proof}

\subsection{Computing the alliance polynomials for cubic graphs with small order}
\label{sect3}
In this subsection we compute the alliance polynomial of cubic graphs of small order by using Algorithm \ref{algorithm}, and find that non-isomorphic cubic graphs of order at most $10$ have different alliance polynomials. By Theorem \ref{t:Reg0-3} this implies these cubic graphs are uniquely determined by their alliance polynomial.
A similar study on characterization of cubic graphs with small order by their domination polynomials is done in \cite{AP1}, although it obtains a different result.

Computing the alliance polynomial of a graph $G$ on $n$ vertices and $m$ edges
by calculating $k_S$ for each connected induced subgraph $\langle S\rangle$ takes time $O(m2^n)$.
On $\D$-regular graphs the complexity is $O(n2^n)$.
Note that in order to decreasing this time for small size of $G$ could be used its topology by traveling each connected induced subgraph.
Testing whether $\langle S\rangle$ is connected can be done using Depth-First Search (DFS), and this has time complexity $O(m)$. Finding $k_S$ requires $O(n)$ time.

Let $G$ be a cubic graph with order $n$.
If $n=4$ then $G$ is isomorphic to $K_4$ and Theorem \ref{t:Reg0-3} gives uniqueness.
If $n=6$ then $G$ is isomorphic either to $K_{3,3}$ or to the Cartesian product $P_2\Box C_3$;
hence, Theorem \ref{t:Reg0-3} implies that they are uniquely determined by their alliance polynomial since $A(K_{3,3};x)=6 x^3 + 33 x^5 + 15 x^7 + x^9$ and $A(P_2\Box C_3;x)=6 x^3 + 33 x^5 + 11 x^7 + x^9$.
Notice that these alliance polynomials are equal except for the coefficient of $x^7$;
it is an interesting fact since many parameters of these graphs are different.


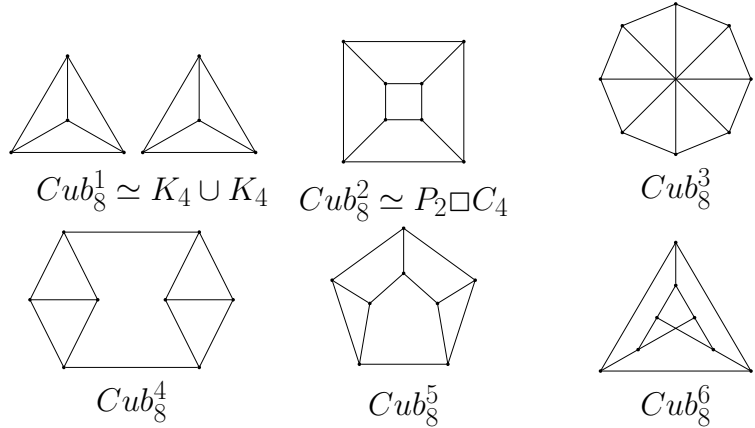
\begin{figure}[ht]
\centering
\begin{minipage}[t]{3.5cm}
    \centering
       \scalebox{.5}
       {\begin{pspicture}(-1.8,-1.05)(5.3,3)
       \psline[linewidth=0.01cm,dotsize=0.07055555cm 2.5]{*-*}(-1.5,0)(1.5,0)
       \psline[linewidth=0.01cm,dotsize=0.07055555cm 2.5]{*-*}(0,0.855)(0,2.565)
       \psline[linewidth=0.01cm,dotsize=0.07055555cm 2.5]{-}(0,2.565)(1.5,0)(0,0.855)(-1.5,0)(0,2.565)
       \psline[linewidth=0.01cm,dotsize=0.07055555cm 2.5]{*-*}(2,0)(5,0)
       \psline[linewidth=0.01cm,dotsize=0.07055555cm 2.5]{*-*}(3.5,0.855)(3.5,2.565)
       \psline[linewidth=0.01cm,dotsize=0.07055555cm 2.5]{-}(3.5,2.565)(5,0)(3.5,0.855)(2,0)(3.5,2.565)
       \uput[270](2.25,-0.3){\Huge{$Cub_8^1\simeq K_4\cup K_4$}}
       \end{pspicture}}
\end{minipage}
\begin{minipage}[t]{3.5cm}
    \centering
    \scalebox{.5}
    {\begin{pspicture}(-1.92,-2.4)(1.92,2.22)
    \psline[linewidth=0.01cm,dotsize=0.07055555cm 2.5]{*-*}(-1.6,-1.6)(1.6,-1.6)
    \psline[linewidth=0.01cm,dotsize=0.07055555cm 2.5]{*-*}(1.6,1.6)(-1.6,1.6)
    \psline[linewidth=0.01cm,dotsize=0.07055555cm 2.5]{*-*}(-0.48,0.48)(0.48,0.48)
    \psline[linewidth=0.01cm,dotsize=0.07055555cm 2.5]{*-*}(0.48,-0.48)(-0.48,-0.48)
    \psline[linewidth=0.01cm,dotsize=0.07055555cm 2.5]{-}(0.48,-0.48)(1.6,-1.6)(1.6,1.6)(0.48,0.48)(0.48,-0.48)
    \psline[linewidth=0.01cm,dotsize=0.07055555cm 2.5]{-}(-1.6,1.6)(-1.6,-1.6)(-0.48,-0.48)(-0.48,0.48)(-1.6,1.6)
    \uput[270](0,-1.92){\Huge{$Cub_8^2\simeq P_2\Box C_4$}}
    \end{pspicture}}
\end{minipage}
\begin{minipage}[t]{3.5cm}
      \centering
      \scalebox{.5}
      {\begin{pspicture}(-2.4,-3)(2.4,2.4)
      \psline[linewidth=0.01cm,dotsize=0.07055555cm 2.5]{*-*}(-2,0)(2,0)
      \psline[linewidth=0.01cm,dotsize=0.07055555cm 2.5]{*-*}(1.42,1.42)(-1.42,-1.42)
      \psline[linewidth=0.01cm,dotsize=0.07055555cm 2.5]{*-*}(0,-2)(0,2)
      \psline[linewidth=0.01cm,dotsize=0.07055555cm 2.5]{*-*}(-1.42,1.42)(1.42,-1.42)
      \psline[linewidth=0.01cm,dotsize=0.07055555cm 2.5]{-}(-2,0)(-1.42,-1.42)(0,-2)(1.42,-1.42)(2,0)(1.42,1.42)(0,2)(-1.42,1.42)(-2,0)
      \uput[270](0,-2.2){\Huge{$Cub_8^3$}}
      \end{pspicture}}
\end{minipage}

\vfill
\vspace{.35cm}

\centering
\begin{minipage}[t]{3.5cm}
    \centering
       \scalebox{.5}
       {\begin{pspicture}(-3,-2.5)(3,2)
       \psline[linewidth=0.01cm,dotsize=0.07055555cm 2.5]{*-*}(-1.8,-1.8)(1.8,-1.8)
       \psline[linewidth=0.01cm,dotsize=0.07055555cm 2.5]{*-*}(-2.7,0)(-0.9,0)
       \psline[linewidth=0.01cm,dotsize=0.07055555cm 2.5]{*-*}(0.9,0)(2.7,0)
       \psline[linewidth=0.01cm,dotsize=0.07055555cm 2.5]{*-*}(-1.8,1.8)(1.8,1.8)
       \psline[linewidth=0.01cm,dotsize=0.07055555cm 2.5]{-}(-1.8,1.8)(-2.7,0)(-1.8,-1.8)(-0.9,0)(-1.8,1.8)
       \psline[linewidth=0.01cm,dotsize=0.07055555cm 2.5]{-}(1.8,-1.8)(0.9,0)(1.8,1.8)(2.7,0)(1.8,-1.8)
       \uput[270](0,-2){\Huge{$Cub_8^4$}}
       \end{pspicture}}
\end{minipage}
\begin{minipage}[t]{3.5cm}
    \centering
       \scalebox{.5}
       {\begin{pspicture}(-2.2,-2.4)(2.2,2.2)
       \psline[linewidth=0.01cm,dotsize=0.07055555cm 2.5]{*-*}(-1.176,-1.618)(1.176,-1.618)
       \psline[linewidth=0.01cm,dotsize=0.07055555cm 2.5]{*-*}(1.902,0.618)(0,2)
       \psline[linewidth=0.01cm,dotsize=0.07055555cm 2.5]{*-*}(0,0.8)(0.9,0)
       \psline[linewidth=0.01cm,dotsize=0.07055555cm 2.5]{*-*}(-1.902,0.618)(-0.9,0)
       \psline[linewidth=0.01cm,dotsize=0.07055555cm 2.5]{-}(1.176,-1.618)(1.902,0.618)(0.9,0)(1.176,-1.618)
       \psline[linewidth=0.01cm,dotsize=0.07055555cm 2.5]{-}(0,2)(0,0.8)(-0.9,0)(-1.176,-1.618)(-1.902,0.618)(0,2)
       \uput[270](0,-2){\Huge{$Cub_8^5$}}
       \end{pspicture}}
\end{minipage}
\begin{minipage}[t]{3.5cm}
      \centering
       \scalebox{.5}
       {\begin{pspicture}(-2.4,-0.6)(2.4,3.5)
       \psline[linewidth=0.01cm,dotsize=0.07055555cm 2.5]{*-*}(-2,0)(2,0)
       \psline[linewidth=0.01cm,dotsize=0.07055555cm 2.5]{*-*}(0,2.28)(0,3.42)
       \psline[linewidth=0.01cm,dotsize=0.07055555cm 2.5]{*-*}(-1,0.57)(-0.5,1.42)
       \psline[linewidth=0.01cm,dotsize=0.07055555cm 2.5]{*-*}(1,0.57)(0.5,1.42)
       \psline[linewidth=0.01cm,dotsize=0.07055555cm 2.5]{-}(-2,0)(0,3.42)(2,0)(1,0.57)(-0.5,1.42)(0,2.28)(0.5,1.42)(-1,0.57)(-2,0)
       \uput[270](0,-0.2){\Huge{$Cub_8^6$}}
       \end{pspicture}}
\end{minipage}
\caption{Cubic graphs with order $8$.}
\label{fig:ord8}
\end{figure}


\begin{table}[ht]
  \begin{center}
  \scalebox{.85}{
    \begin{tabular}{|c|c|c|c|}
        \hline
         &&&\\
        Graph & Alliance polynomial & Graph & Alliance polynomial \\
         &&&\\
        \hline
         &&&\\
         $Cub_8^1$ & $8 x^5 + 12 x^7 + 8 x^9 + 2x^{11}$ & $Cub_8^4$ & $8 x^5 + 94 x^7 + 20 x^9 + x^{11}$ \\
         &&&\\
        \hline
         &&&\\
         $Cub_8^2$ & $8 x^5 + 128 x^7 + 30 x^9 + x^{11}$ & $Cub_8^5$ & $8 x^5 + 118 x^7 + 24 x^9 + x^{11}$ \\
         &&&\\
        \hline
         &&&\\
         $Cub_8^3$ & $8 x^5 + 132 x^7 + 32 x^9 + x^{11}$ & $Cub_8^6$ & $8 x^5 + 126 x^7 + 28 x^9 + x^{11}$ \\
         &&&\\
        \hline
    \end{tabular}}
    \vspace{.35cm}
  \end{center}
  \caption{Alliance polynomials of cubic graph of order $8$.}
  \label{tab:ord8}
\end{table}

Figure \ref{fig:ord8} shows the cubic graphs with order $8$ and Table \ref{tab:ord8} their alliance polynomials; since they are different, Theorem \ref{t:Reg0-3} gives their uniqueness.



\begin{figure}[ht]
\centering
\begin{minipage}[t]{3.5cm}
    \centering
       \scalebox{.5}
       {\begin{pspicture}(-2.2,-2.4)(2.2,2.2)
       \psline[linewidth=0.01cm,dotsize=0.07055555cm 2.5]{*-*}(-1.176,-1.618)(-0.588,-0.809)
       \psline[linewidth=0.01cm,dotsize=0.07055555cm 2.5]{*-*}(1.176,-1.618)(0.588,-0.809)
       \psline[linewidth=0.01cm,dotsize=0.07055555cm 2.5]{*-*}(0,2)(0,1)
       \psline[linewidth=0.01cm,dotsize=0.07055555cm 2.5]{*-*}(-1.902,0.618)(-0.951,0.309)
       \psline[linewidth=0.01cm,dotsize=0.07055555cm 2.5]{*-*}(1.902,0.618)(0.951,0.309)
       \psline[linewidth=0.01cm,dotsize=0.07055555cm 2.5]{-}(1.176,-1.618)(1.902,0.618)(0,2)(-1.902,0.618)(-1.176,-1.618)(1.176,-1.618)
       \psline[linewidth=0.01cm,dotsize=0.07055555cm 2.5]{-}(0,1)(-0.588,-0.809)(0.951,0.309)(-0.951,0.309)(0.588,-0.809)(0,1)
       \uput[270](0,-2){\huge{$Cub_{10}^1$}}
       \end{pspicture}}
\end{minipage}
\begin{minipage}[t]{3.5cm}
    \centering
       \scalebox{.5}
       {\begin{pspicture}(-2.2,-2.4)(2.2,2.2)
       \psline[linewidth=0.01cm,dotsize=0.07055555cm 2.5]{*-*}(-1.176,-1.618)(-0.588,-0.809)
       \psline[linewidth=0.01cm,dotsize=0.07055555cm 2.5]{*-*}(1.176,-1.618)(0.588,-0.809)
       \psline[linewidth=0.01cm,dotsize=0.07055555cm 2.5]{*-*}(0,2)(0,1)
       \psline[linewidth=0.01cm,dotsize=0.07055555cm 2.5]{*-*}(-1.902,0.618)(-0.951,0.309)
       \psline[linewidth=0.01cm,dotsize=0.07055555cm 2.5]{*-*}(1.902,0.618)(0.951,0.309)
       \psline[linewidth=0.01cm,dotsize=0.07055555cm 2.5]{-}(-1.176,-1.618)(1.176,-1.618)(1.902,0.618)(0,2)(-1.902,0.618)(-1.176,-1.618)
       \psline[linewidth=0.01cm,dotsize=0.07055555cm 2.5]{-}(0.588,-0.809)(0.951,0.309)(0,1)(-0.951,0.309)(-0.588,-0.809)(0.588,-0.809)
       \uput[270](0,-2){\huge{$Cub_{10}^2$}}
       \end{pspicture}}
\end{minipage}
\begin{minipage}[t]{3.5cm}
      \centering
       \scalebox{.5}
       {\begin{pspicture}(-2.2,-2.4)(2.2,2.2)
       \psline[linewidth=0.01cm,dotsize=0.07055555cm 2.5]{*-*}(-1.176,-1.618)(-0.588,-0.809)
       \psline[linewidth=0.01cm,dotsize=0.07055555cm 2.5]{*-*}(1.176,-1.618)(0.588,-0.809)
       \psline[linewidth=0.01cm,dotsize=0.07055555cm 2.5]{*-*}(0,2)(0,1)
       \psline[linewidth=0.01cm,dotsize=0.07055555cm 2.5]{*-*}(-1.902,0.618)(-0.951,0.309)
       \psline[linewidth=0.01cm,dotsize=0.07055555cm 2.5]{*-*}(1.902,0.618)(0.951,0.309)
       \psline[linewidth=0.01cm,dotsize=0.07055555cm 2.5]{-}(-0.588,-0.809)(1.176,-1.618)(1.902,0.618)(0,2)(-1.902,0.618)(-1.176,-1.618)(0.588,-0.809) 
       \psline[linewidth=0.01cm,dotsize=0.07055555cm 2.5]{-}(0.588,-0.809)(0.951,0.309)(0,1)(-0.951,0.309)(-0.588,-0.809)
       \uput[270](0,-2){\huge{$Cub_{10}^3$}}
       \end{pspicture}}
\end{minipage}
\begin{minipage}[t]{3.5cm}
      \centering
       \scalebox{.5}
       {\begin{pspicture}(-2.2,-2.4)(2.2,2.2)
       \psline[linewidth=0.01cm,dotsize=0.07055555cm 2.5]{*-*}(-1.176,-1.618)(0.588,-0.809)
       \psline[linewidth=0.01cm,dotsize=0.07055555cm 2.5]{*-*}(1.176,-1.618)(-0.588,-0.809)
       \psline[linewidth=0.01cm,dotsize=0.07055555cm 2.5]{*-*}(0,2)(0,1)
       \psline[linewidth=0.01cm,dotsize=0.07055555cm 2.5]{*-*}(-1.902,0.618)(-0.951,0.309)
       \psline[linewidth=0.01cm,dotsize=0.07055555cm 2.5]{*-*}(1.902,0.618)(0.951,0.309)
       \psline[linewidth=0.01cm,dotsize=0.07055555cm 2.5]{-}(-1.176,-1.618)(1.176,-1.618)(1.902,0.618)(0,2)(-1.902,0.618)(-1.176,-1.618)
       \psline[linewidth=0.01cm,dotsize=0.07055555cm 2.5]{-}(0.588,-0.809)(0.951,0.309)(0,1)(-0.951,0.309)(-0.588,-0.809)(0.588,-0.809)
       \uput[270](0,-2){\huge{$Cub_{10}^4$}}
       \end{pspicture}}
\end{minipage}
\vfill
\vspace{.4cm}
\centering
\begin{minipage}[t]{3.5cm}
    \centering
       \scalebox{.5}
       {\begin{pspicture}(-2.2,-2.4)(2.2,2.2)
       \psline[linewidth=0.01cm,dotsize=0.07055555cm 2.5]{*-*}(-1.176,-1.618)(1.176,-1.618)
       \psline[linewidth=0.01cm,dotsize=0.07055555cm 2.5]{*-*}(0,-1)(0,-0.4)
       \psline[linewidth=0.01cm,dotsize=0.07055555cm 2.5]{*-*}(0,2)(0,1)
       \psline[linewidth=0.01cm,dotsize=0.07055555cm 2.5]{*-*}(-1.902,0.618)(-1,0)
       \psline[linewidth=0.01cm,dotsize=0.07055555cm 2.5]{*-*}(1.902,0.618)(1,0)
       \psline[linewidth=0.01cm,dotsize=0.07055555cm 2.5]{-}(-1.176,-1.618)(0,-1)(1.176,-1.618)(1.902,0.618)(0,2)(-1.902,0.618)(-1.176,-1.618)
       \psline[linewidth=0.01cm,dotsize=0.07055555cm 2.5]{-}(0,1)(1,0)(0,-0.4)(-1,0)(0,1)
       \uput[270](0,-2){\huge{$Cub_{10}^5$}}
       \end{pspicture}}
\end{minipage}
\begin{minipage}[t]{3.5cm}
    \centering
       \scalebox{.5}
       {\begin{pspicture}(-2.2,-2.4)(2.2,2.2)
       \psline[linewidth=0.01cm,dotsize=0.07055555cm 2.5]{*-*}(-2,0)(2,0)
       \psline[linewidth=0.01cm,dotsize=0.07055555cm 2.5]{*-*}(1.618,1.176)(0.618,-1.902)
       \psline[linewidth=0.01cm,dotsize=0.07055555cm 2.5]{*-*}(0.618,1.902)(-1.618,-1.176)
       \psline[linewidth=0.01cm,dotsize=0.07055555cm 2.5]{*-*}(-0.618,1.902)(1.618,-1.176)
       \psline[linewidth=0.01cm,dotsize=0.07055555cm 2.5]{*-*}(-1.618,1.176)(-0.618,-1.902)
       \psline[linewidth=0.01cm,dotsize=0.07055555cm 2.5]{-}(2,0)(1.618,1.176)(0.618,1.902)(-0.618,1.902)(-1.618,1.176)(-2,0)(-1.618,-1.176)(-0.618,-1.902)(0.618,-1.902)(1.618,-1.176)(2,0)
       \uput[270](0,-2){\huge{$Cub_{10}^6$}}
       \end{pspicture}}
\end{minipage}
\begin{minipage}[t]{3.5cm}
      \centering
       \scalebox{.5}
       {\begin{pspicture}(-2.2,-2.4)(2.2,2.2)
       \psline[linewidth=0.01cm,dotsize=0.07055555cm 2.5]{*-*}(2,0)(-0.618,1.902)
       \psline[linewidth=0.01cm,dotsize=0.07055555cm 2.5]{*-*}(-2,0)(0.618,-1.902)
       \psline[linewidth=0.01cm,dotsize=0.07055555cm 2.5]{*-*}(0.618,1.902)(-1.618,-1.176)
       \psline[linewidth=0.01cm,dotsize=0.07055555cm 2.5]{*-*}(-0.618,-1.902)(1.618,-1.176)
       \psline[linewidth=0.01cm,dotsize=0.07055555cm 2.5]{*-*}(-1.618,1.176)(1.618,1.176)
       \psline[linewidth=0.01cm,dotsize=0.07055555cm 2.5]{-}(2,0)(1.618,1.176)(0.618,1.902)(-0.618,1.902)(-1.618,1.176)(-2,0)(-1.618,-1.176)(-0.618,-1.902)(0.618,-1.902)(1.618,-1.176)(2,0)
       \uput[270](0,-2){\huge{$Cub_{10}^7$}}
       \end{pspicture}}
\end{minipage}
\begin{minipage}[t]{3.5cm}
      \centering
       \scalebox{.5}
       {\begin{pspicture}(-2.2,-2.4)(2.2,2.2)
       \psline[linewidth=0.01cm,dotsize=0.07055555cm 2.5]{*-*}(-2,0)(2,0)
       \psline[linewidth=0.01cm,dotsize=0.07055555cm 2.5]{*-*}(1.618,1.176)(-0.618,1.902)
       \psline[linewidth=0.01cm,dotsize=0.07055555cm 2.5]{*-*}(0.618,1.902)(-1.618,-1.176)
       \psline[linewidth=0.01cm,dotsize=0.07055555cm 2.5]{*-*}(0.618,-1.902)(-1.618,1.176)
       \psline[linewidth=0.01cm,dotsize=0.07055555cm 2.5]{*-*}(1.618,-1.176)(-0.618,-1.902)
       \psline[linewidth=0.01cm,dotsize=0.07055555cm 2.5]{-}(2,0)(1.618,1.176)(0.618,1.902)(-0.618,1.902)(-1.618,1.176)(-2,0)(-1.618,-1.176)(-0.618,-1.902)(0.618,-1.902)(1.618,-1.176)(2,0)
       \uput[270](0,-2){\huge{$Cub_{10}^8$}}
       \end{pspicture}}
\end{minipage}
\vfill
\vspace{.4cm}
\centering
\begin{minipage}[t]{3.5cm}
    \centering
       \scalebox{.5}
       {\begin{pspicture}(-2.2,-2.4)(2.2,2.2)
       \psline[linewidth=0.01cm,dotsize=0.07055555cm 2.5]{*-*}(-2,0)(2,0)
       \psline[linewidth=0.01cm,dotsize=0.07055555cm 2.5]{*-*}(1.618,1.176)(1.618,-1.176)
       \psline[linewidth=0.01cm,dotsize=0.07055555cm 2.5]{*-*}(0.618,1.902)(0.618,-1.902)
       \psline[linewidth=0.01cm,dotsize=0.07055555cm 2.5]{*-*}(-0.618,1.902)(-0.618,-1.902)
       \psline[linewidth=0.01cm,dotsize=0.07055555cm 2.5]{*-*}(-1.618,1.176)(-1.618,-1.176)
       \psline[linewidth=0.01cm,dotsize=0.07055555cm 2.5]{-}(2,0)(1.618,1.176)(0.618,1.902)(-0.618,1.902)(-1.618,1.176)(-2,0)(-1.618,-1.176)(-0.618,-1.902)(0.618,-1.902)(1.618,-1.176)(2,0)
       \uput[270](0,-2){\huge{$Cub_{10}^9$}}
       \end{pspicture}}
\end{minipage}
\begin{minipage}[t]{3.5cm}
    \centering
       \scalebox{.5}
       {\begin{pspicture}(-2.2,-2.4)(2.2,2.2)
       \psline[linewidth=0.01cm,dotsize=0.07055555cm 2.5]{*-*}(-2,0)(2,0)
       \psline[linewidth=0.01cm,dotsize=0.07055555cm 2.5]{*-*}(1.618,1.176)(1.618,-1.176)
       \psline[linewidth=0.01cm,dotsize=0.07055555cm 2.5]{*-*}(0.618,1.902)(-0.618,-1.902)
       \psline[linewidth=0.01cm,dotsize=0.07055555cm 2.5]{*-*}(-0.618,1.902)(0.618,-1.902)
       \psline[linewidth=0.01cm,dotsize=0.07055555cm 2.5]{*-*}(-1.618,1.176)(-1.618,-1.176)
       \psline[linewidth=0.01cm,dotsize=0.07055555cm 2.5]{-}(2,0)(1.618,1.176)(0.618,1.902)(-0.618,1.902)(-1.618,1.176)(-2,0)(-1.618,-1.176)(-0.618,-1.902)(0.618,-1.902)(1.618,-1.176)(2,0)
       \uput[270](0,-2){\huge{$Cub_{10}^{10}$}}
       \end{pspicture}}
\end{minipage}
\begin{minipage}[t]{3.5cm}
      \centering
       \scalebox{.5}
       {\begin{pspicture}(-2.2,-2.4)(2.2,2.2)
       \psline[linewidth=0.01cm,dotsize=0.07055555cm 2.5]{*-*}(2,0)(-0.618,-1.902)
       \psline[linewidth=0.01cm,dotsize=0.07055555cm 2.5]{*-*}(1.618,1.176)(-0.618,1.902)
       \psline[linewidth=0.01cm,dotsize=0.07055555cm 2.5]{*-*}(0.618,1.902)(-1.618,1.176)
       \psline[linewidth=0.01cm,dotsize=0.07055555cm 2.5]{*-*}(-2,0)(0.618,-1.902)
       \psline[linewidth=0.01cm,dotsize=0.07055555cm 2.5]{*-*}(-1.618,-1.176)(1.618,-1.176)
       \psline[linewidth=0.01cm,dotsize=0.07055555cm 2.5]{-}(2,0)(1.618,1.176)(0.618,1.902)(-0.618,1.902)(-1.618,1.176)(-2,0)(-1.618,-1.176)(-0.618,-1.902)(0.618,-1.902)(1.618,-1.176)(2,0)
       \uput[270](0,-2){\huge{$Cub_{10}^{11}$}}
       \end{pspicture}}
\end{minipage}
\begin{minipage}[t]{3.5cm}
      \centering
       \scalebox{.5}
       {\begin{pspicture}(-2.2,-2.4)(2.2,2.2)
       \psline[linewidth=0.01cm,dotsize=0.07055555cm 2.5]{*-*}(2,0)(-1.618,-1.176)
       \psline[linewidth=0.01cm,dotsize=0.07055555cm 2.5]{*-*}(1.618,1.176)(-0.618,1.902)
       \psline[linewidth=0.01cm,dotsize=0.07055555cm 2.5]{*-*}(0.618,1.902)(-1.618,1.176)
       \psline[linewidth=0.01cm,dotsize=0.07055555cm 2.5]{*-*}(-2,0)(0.618,-1.902)
       \psline[linewidth=0.01cm,dotsize=0.07055555cm 2.5]{*-*}(1.618,-1.176)(-0.618,-1.902)
       \psline[linewidth=0.01cm,dotsize=0.07055555cm 2.5]{-}(2,0)(1.618,1.176)(0.618,1.902)(-0.618,1.902)(-1.618,1.176)(-2,0)(-1.618,-1.176)(-0.618,-1.902)(0.618,-1.902)(1.618,-1.176)(2,0)
       \uput[270](0,-2){\huge{$Cub_{10}^{12}$}}
       \end{pspicture}}
\end{minipage}
\vfill
\vspace{.4cm}
\centering
\begin{minipage}[t]{3.5cm}
    \centering
       \scalebox{.5}
       {\begin{pspicture}(-2.2,-2.4)(2.2,2.2)
       \psline[linewidth=0.01cm,dotsize=0.07055555cm 2.5]{*-*}(2,0)(0.618,-1.902)
       \psline[linewidth=0.01cm,dotsize=0.07055555cm 2.5]{*-*}(1.618,1.176)(-0.618,1.902)
       \psline[linewidth=0.01cm,dotsize=0.07055555cm 2.5]{*-*}(-1.618,1.176)(0.618,1.902)
       \psline[linewidth=0.01cm,dotsize=0.07055555cm 2.5]{*-*}(-2,0)(-0.618,-1.902)
       \psline[linewidth=0.01cm,dotsize=0.07055555cm 2.5]{*-*}(-1.618,-1.176)(1.618,-1.176)
       \psline[linewidth=0.01cm,dotsize=0.07055555cm 2.5]{-}(2,0)(1.618,1.176)(0.618,1.902)(-0.618,1.902)(-1.618,1.176)(-2,0)(-1.618,-1.176)(-0.618,-1.902)(0.618,-1.902)(1.618,-1.176)(2,0)
       \uput[270](0,-2){\huge{$Cub_{10}^{13}$}}
       \end{pspicture}}
\end{minipage}
\begin{minipage}[t]{3.5cm}
    \centering
       \scalebox{.5}
       {\begin{pspicture}(-2.2,-2.4)(2.2,2.2)
       \psline[linewidth=0.01cm,dotsize=0.07055555cm 2.5]{*-*}(-2,0)(2,0)
       \psline[linewidth=0.01cm,dotsize=0.07055555cm 2.5]{*-*}(1.618,1.176)(0.618,-1.902)
       \psline[linewidth=0.01cm,dotsize=0.07055555cm 2.5]{*-*}(0.618,1.902)(1.618,-1.176)
       \psline[linewidth=0.01cm,dotsize=0.07055555cm 2.5]{*-*}(-1.618,1.176)(-0.618,-1.902)
       \psline[linewidth=0.01cm,dotsize=0.07055555cm 2.5]{*-*}(-0.618,1.902)(-1.618,-1.176)
       \psline[linewidth=0.01cm,dotsize=0.07055555cm 2.5]{-}(2,0)(1.618,1.176)(0.618,1.902)(-0.618,1.902)(-1.618,1.176)(-2,0)(-1.618,-1.176)(-0.618,-1.902)(0.618,-1.902)(1.618,-1.176)(2,0)
       \uput[270](0,-2){\huge{$Cub_{10}^{14}$}}
       \end{pspicture}}
\end{minipage}
\begin{minipage}[t]{3.5cm}
      \centering
      \scalebox{.5}
      {\begin{pspicture}(-2.2,-2.4)(2.2,2.2)
      \psline[linewidth=0.01cm,dotsize=0.07055555cm 2.5]{*-*}(-2,0)(2,0)
      \psline[linewidth=0.01cm,dotsize=0.07055555cm 2.5]{*-*}(0,-0.7)(0,-2)
      \psline[linewidth=0.01cm,dotsize=0.07055555cm 2.5]{*-*}(1.42,1.42)(0,0.7)(-1.42,1.42)
      \psline[linewidth=0.01cm,dotsize=0.07055555cm 2.5]{*-*}(0,0.7)(0,2)
      \psline[linewidth=0.01cm,dotsize=0.07055555cm 2.5]{*-*}(-1.42,-1.42)(0,-0.7)(1.42,-1.42)
      \psline[linewidth=0.01cm,dotsize=0.07055555cm 2.5]{-}(-2,0)(-1.42,-1.42)(0,-2)(1.42,-1.42)(2,0)(1.42,1.42)(0,2)(-1.42,1.42)(-2,0)
      \uput[270](0,-2){\huge{$Cub_{10}^{15}$}}
      \end{pspicture}}
\end{minipage}
\begin{minipage}[t]{3.5cm}
      \centering
      \scalebox{.5}
      {\begin{pspicture}(-2.2,-2.4)(2.2,2.2)
      \psline[linewidth=0.01cm,dotsize=0.07055555cm 2.5]{*-*}(0,0)(0.47,1.14)
      \psline[linewidth=0.01cm,dotsize=0.07055555cm 2.5]{*-*}(2,0)(-1.42,-1.42)
      \psline[linewidth=0.01cm,dotsize=0.07055555cm 2.5]{*-*}(1.42,1.42)(0.47,1.14)(0,2)
      \psline[linewidth=0.01cm,dotsize=0.07055555cm 2.5]{*-*}(0,-2)(-1.42,1.42)
      \psline[linewidth=0.01cm,dotsize=0.07055555cm 2.5]{*-*}(-2,0)(0,0)(1.42,-1.42)
      \psline[linewidth=0.01cm,dotsize=0.07055555cm 2.5]{-}(-2,0)(-1.42,-1.42)(0,-2)(1.42,-1.42)(2,0)(1.42,1.42)(0,2)(-1.42,1.42)(-2,0)
      \uput[270](0,-2){\huge{$Cub_{10}^{16}$}}
      \end{pspicture}}
\end{minipage}
\vfill
\vspace{.4cm}
\centering
\begin{minipage}[t]{3.5cm}
    \centering
      \scalebox{.5}
      {\begin{pspicture}(-2.2,-1.4)(2.2,3.2)
      \psline[linewidth=0.01cm,dotsize=0.07055555cm 2.5]{*-*}(0,3)(0.667,2)
      \psline[linewidth=0.01cm,dotsize=0.07055555cm 2.5]{*-*}(0,-1)(-0.667,0)
      \psline[linewidth=0.01cm,dotsize=0.07055555cm 2.5]{*-*}(-2,2)(0,3)(2,2)
      \psline[linewidth=0.01cm,dotsize=0.07055555cm 2.5]{*-*}(-0.667,2)(0.667,0)
      \psline[linewidth=0.01cm,dotsize=0.07055555cm 2.5]{*-*}(-2,0)(0,-1)(2,0)
      \psline[linewidth=0.01cm,dotsize=0.07055555cm 2.5]{-}(-2,0)(-0.667,0)(0.667,0)(2,0)(2,2)(0.667,2)(-0.667,2)(-2,2)(-2,0)
      \uput[270](0,-1){\huge{$Cub_{10}^{17}$}}
      \end{pspicture}}
\end{minipage}
\begin{minipage}[t]{3.5cm}
    \centering
      \scalebox{.5}
      {\begin{pspicture}(-2.2,-2.4)(2.2,2.2)
      \psline[linewidth=0.01cm,dotsize=0.07055555cm 2.5]{*-*}(0,0)(0,1.15)
      \psline[linewidth=0.01cm,dotsize=0.07055555cm 2.5]{*-*}(1,1.73)(0,1.15)(-1,1.73)
      \psline[linewidth=0.01cm,dotsize=0.07055555cm 2.5]{*-*}(-0.86,-0.5)(0,0)(0.86,-0.5)
      \psline[linewidth=0.01cm,dotsize=0.07055555cm 2.5]{*-*}(2,0)(0.86,-0.5)(1,-1.73)
      \psline[linewidth=0.01cm,dotsize=0.07055555cm 2.5]{*-*}(-2,0)(-0.86,-0.5)(-1,-1.73)
      \psline[linewidth=0.01cm,dotsize=0.07055555cm 2.5]{-}(-2,0)(-1,-1.73)(1,-1.73)(2,0)(1,1.73)(-1,1.73)(-2,0)
      \uput[270](0,-2){\huge{$Cub_{10}^{18}$}}
      \end{pspicture}}
\end{minipage}
\begin{minipage}[t]{3.5cm}
      \centering
      \scalebox{.5}
      {\begin{pspicture}(-2.7,-2.4)(2.7,2.2)
      \psline[linewidth=0.01cm,dotsize=0.07055555cm 2.5]{*-*}(-0.75,0)(0.75,0)
      \psline[linewidth=0.01cm,dotsize=0.07055555cm 2.5]{*-*}(-1.7865,1.4265)(-3.4635,-0.882)
      \psline[linewidth=0.01cm,dotsize=0.07055555cm 2.5]{*-*}(-1.7865,-1.4265)(-3.4635,0.882)
      \psline[linewidth=0.01cm,dotsize=0.07055555cm 2.5]{*-*}(1.7865,1.4265)(3.4635,-0.882)
      \psline[linewidth=0.01cm,dotsize=0.07055555cm 2.5]{*-*}(1.7865,-1.4265)(3.4635,0.882)
      \psline[linewidth=0.01cm,dotsize=0.07055555cm 2.5]{-}(-0.75,0)(-1.7865,1.4265)(-3.4635,0.882)(-3.4635,-0.882)(-1.7865,-1.4265)(-0.75,0)
      \psline[linewidth=0.01cm,dotsize=0.07055555cm 2.5]{-}(0.75,0)(1.7865,1.4265)(3.4635,0.882)(3.4635,-0.882)(1.7865,-1.4265)(0.75,0)
      \uput[270](0,-2){\huge{$Cub_{10}^{19}$}}
      \end{pspicture}}
\end{minipage}
\begin{minipage}[t]{3.5cm}
      \scalebox{.5}
      {\begin{pspicture}(-2.7,-2.4)(2.7,2.2)
      \uput[270](0.2,1.1){\huge{$Cub_{10}^{20}\simeq K_4 \cup K_{3,3}$}}
      \uput[270](0.2,-1.1){\huge{$Cub_{10}^{21}\simeq K_4 \cup P_2\Box C_3$}}
      \end{pspicture}}
\end{minipage}
\caption{Cubic graphs with order $10$.}
\label{fig:ord10}
\end{figure}
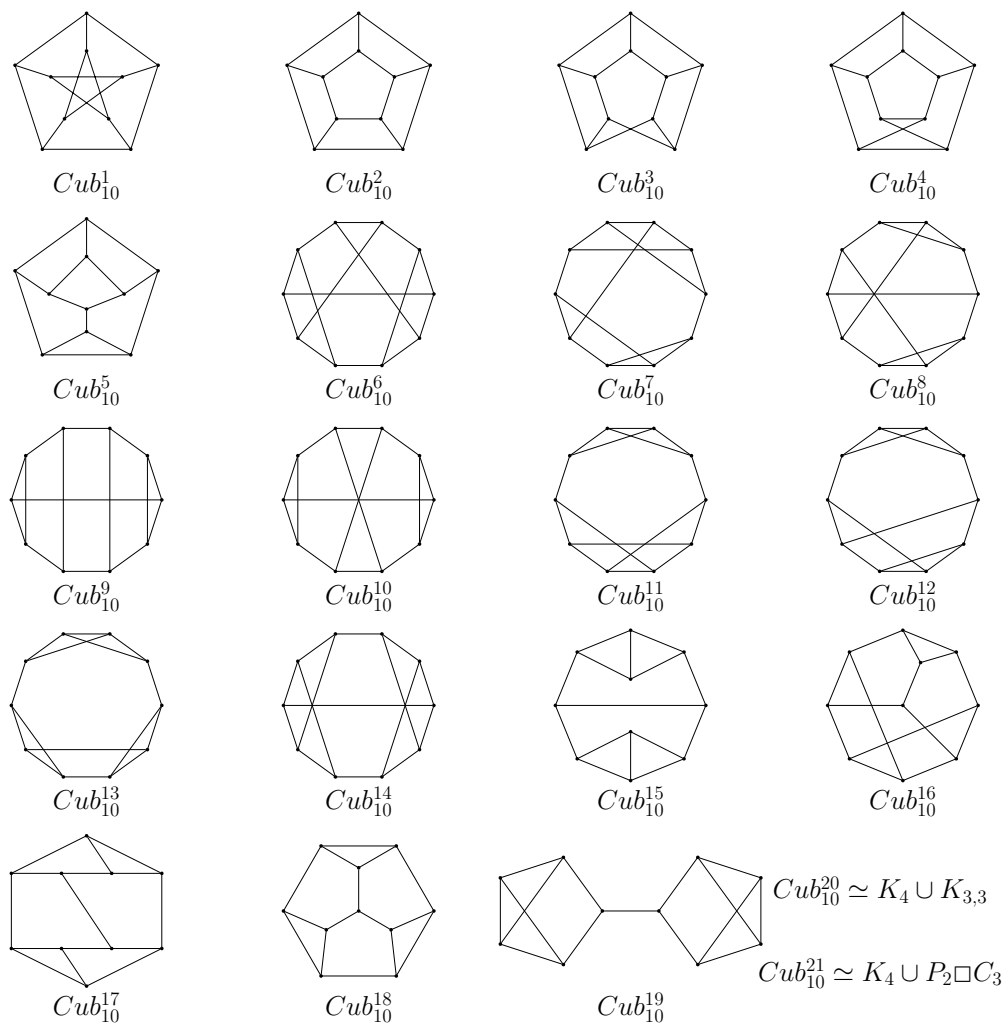

\begin{table}[ht]
   \centering
   \scalebox{.85}{
    \begin{tabular}{|c|c|c|c|c|c|}
        \hline
         &&&&&\\
        Graph & Alliance polynomial & Graph & Alliance polynomial & Graph & Alliance polynomial\\
         &&&&&\\
        \hline
         &&&&&\\
         $Cub_{10}^1$ & $10 x^7 + 480 x^9 + 77 x^{11} + x^{13}$ & $Cub_{10}^{8}$ & $10 x^7 + 407 x^9 + 56 x^{11} + x^{13}$ & $Cub_{10}^{15}$ & $10 x^7 + 272 x^9 + 42 x^{11} + x^{13}$ \\
         &&&&&\\
        \hline
         &&&&&\\
         $Cub_{10}^2$ & $10 x^7 + 425 x^9 + 67 x^{11} + x^{13}$ & $Cub_{10}^{9}$ & $10 x^7 + 357 x^9 + 53 x^{11} + x^{13}$ & $Cub_{10}^{16}$ & $10 x^7 + 419 x^9 + 62 x^{11} + x^{13}$ \\
         &&&&&\\
        \hline
         &&&&&\\
         $Cub_{10}^3$ & $10 x^7 + 435 x^9 + 65 x^{11} + x^{13}$ & $Cub_{10}^{10}$ & $10 x^7 + 387 x^9 + 55 x^{11} + x^{13}$ & $Cub_{10}^{17}$ & $10 x^7 + 372 x^9 + 54 x^{11} + x^{13}$ \\
         &&&&&\\
        \hline
         &&&&&\\
         $Cub_{10}^4$ & $10 x^7 + 451 x^9 + 69 x^{11} + x^{13}$ & $Cub_{10}^{11}$ & $10 x^7 + 307 x^9 + 55 x^{11} + x^{13}$ & $Cub_{10}^{18}$ & $10 x^7 + 351 x^9 + 50 x^{11} + x^{13}$ \\
         &&&&&\\
        \hline
         &&&&&\\
         $Cub_{10}^5$ & $10 x^7 + 404 x^9 + 61 x^{11} + x^{13}$ & $Cub_{10}^{12}$ & $10 x^7 + 304 x^9 + 48 x^{11} + x^{13}$ & $Cub_{10}^{19}$ & $10 x^7 + 176 x^9 + 36 x^{11} + x^{13}$ \\
         &&&&&\\
        \hline
         &&&&&\\
         $Cub_{10}^6$ & $10 x^7 + 462 x^9 + 67 x^{11} + x^{13}$ & $Cub_{10}^{13}$ & $10 x^7 + 267 x^9 + 43 x^{11} + x^{13}$ & $Cub_{10}^{20}$ & $10 x^7 + 39 x^9 + 19 x^{11} + 2 x^{13}$ \\
         &&&&&\\
        \hline
         &&&&&\\
         $Cub_{10}^7$ & $10 x^7 + 393 x^9 + 61 x^{11} + x^{13}$ & $Cub_{10}^{14}$ & $10 x^7 + 424 x^9 + 67 x^{11} + x^{13}$ & $Cub_{10}^{21}$ & $10 x^7 + 39 x^9 + 15 x^{11} + 2 x^{13}$ \\
         &&&&&\\
        \hline
    \end{tabular}}
    \vspace{.4cm}
  \caption{Alliance polynomials of cubic graph of order $10$.}
  \label{tab:ord10}
\end{table}

Figure \ref{fig:ord10} shows the cubic graphs with order $10$ and Table \ref{tab:ord10} their alliance polynomials. Since they are different, Theorem \ref{t:Reg0-3} gives their uniqueness.

We say that a graph $G$ is characterized by a graph polynomial $f$ if for every graph $G'$ such that $f(G') = f(G)$ we have that $G'$ is isomorphic to $G$.
A set of graphs $K$ is characterized by a graph polynomial $f$ if every graph $G \in K$ is characterized by $f$.

\begin{proposition}\label{p:uniqCub10}
Every cubic graph of order at most $10$ is characterized by its alliance polynomial.
\end{proposition}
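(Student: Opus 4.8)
The plan is to reduce the statement to a finite check, already carried out in the tables above, by invoking the rigidity of the alliance polynomial on cubic graphs. First I would observe that a cubic graph has even order, so the only cases are $n\in\{4,6,8,10\}$. Now suppose $G$ is a cubic graph with $n\le 10$ and $G'$ is any graph with $A(G';x)=A(G;x)$. By Theorem \ref{t:Reg0-3} (applied with $\D=3$), $G'$ is necessarily a cubic graph with the same order, the same size, and the same number of connected components as $G$. Consequently it suffices to prove that, for each fixed $n\in\{4,6,8,10\}$, the alliance polynomials of the cubic graphs of order $n$ are pairwise distinct; this reduces the proposition to a statement about finitely many graphs.

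Next I would dispatch the small cases and point to the computations. For $n=4$ the only cubic graph is $K_4$, so there is nothing to prove. For $n=6$ there are exactly two cubic graphs, $K_{3,3}$ and $P_2\Box C_3$, and the computation displayed earlier gives $A(K_{3,3};x)=6x^3+33x^5+15x^7+x^9 \neq 6x^3+33x^5+11x^7+x^9 = A(P_2\Box C_3;x)$, so they are distinguished. For $n=8$ the cubic graphs are exactly the six graphs $Cub_8^1,\dots,Cub_8^6$ of Figure \ref{fig:ord8} (including the disconnected graph $K_4\cup K_4$), and Table \ref{tab:ord8}, obtained via Algorithm \ref{algorithm}, exhibits six pairwise different polynomials. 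For $n=10$ the cubic graphs are exactly the $21$ graphs $Cub_{10}^1,\dots,Cub_{10}^{21}$ of Figure \ref{fig:ord10} (including the two disconnected graphs $K_4\cup K_{3,3}$ and $K_4\cup(P_2\Box C_3)$), and Table \ref{tab:ord10} again lists $21$ pairwise different polynomials. Since by the previous paragraph any graph sharing the alliance polynomial of one of these graphs must be a cubic graph of the same order, and within each order the polynomials separate the isomorphism classes, every such graph is isomorphic to the original one.

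The only genuine obstacle is making sure the enumeration of cubic graphs of each order is complete and the computed polynomials are correct; both are routine and computer-verifiable. Completeness of the lists follows from the classification of (connected and disconnected) $3$-regular graphs on at most $10$ vertices, and the polynomial entries are produced by Algorithm \ref{algorithm}, whose correctness is established in \cite{CRST}. Note that Theorem \ref{t:Reg0-3} is exactly what is needed to rule out the a priori possibility that some \emph{non}-cubic graph—or a cubic graph of a different order or with a different number of components—could share one of these polynomials, so no separate argument against such phantom graphs is required.
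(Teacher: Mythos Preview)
Your proposal is correct and mirrors the paper's own argument: invoke Theorem~\ref{t:Reg0-3} to force any graph sharing the alliance polynomial of a cubic graph to be cubic of the same order, size, and number of components, and then appeal to the explicit computations (the $n=4,6$ cases and Tables~\ref{tab:ord8} and~\ref{tab:ord10}) showing that the polynomials are pairwise distinct within each order. The structure, the case split, and the use of Algorithm~\ref{algorithm} are exactly as in the paper.
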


Particularly, by Theorem \ref{p:AlliPoly} ii) we have that the cubic graphs of order at most $10$ are characterized by the evaluation at $x=1$ of their alliance polynomials.

\begin{proposition}\label{p:Eval1Cub10}
Two non-isomorphic cubic graphs of order at most $10$ have a different number of connected induced subgraphs.
\end{proposition}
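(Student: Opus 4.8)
The plan is to reduce the statement to a finite numerical check. By Theorem~\ref{p:AlliPoly}~ii), for every graph $G$ the value $A(G;1)$ equals the number of connected induced subgraphs of $G$; hence Proposition~\ref{p:Eval1Cub10} is equivalent to the assertion that any two non-isomorphic cubic graphs of order at most $10$ have distinct values $A(G;1)$. Note that Proposition~\ref{p:uniqCub10} (equivalently, Tables~\ref{tab:ord8} and~\ref{tab:ord10}) already shows that their \emph{alliance polynomials} are pairwise distinct, but this is not yet enough, since two distinct polynomials may agree at $x=1$; so the genuine content is to verify that the evaluations at $x=1$ are pairwise distinct, and there is no structural shortcut replacing this verification.

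First I would record, using Corollary~\ref{c:Cubic}, that every cubic graph $G$ of order $n$ satisfies
\[
A(G;x) = n\,x^{n-3} + A_{-1}(G)\,x^{n-1} + A_1(G)\,x^{n+1} + A_3(G)\,x^{n+3},
\qquad
A(G;1) = n + A_{-1}(G) + A_1(G) + A_3(G),
\]
with $A_3(G)$ the number of connected components of $G$. Since a cubic graph has even order, the cubic graphs of order at most $10$ are: $K_4$; the two graphs $K_{3,3}$ and $P_2\Box C_3$ of order $6$; the six graphs of order $8$ in Figure~\ref{fig:ord8}; and the twenty-one graphs of order $10$ in Figure~\ref{fig:ord10}. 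For orders $4$ and $6$ one evaluates directly from $A(K_4;x)=4x+6x^{3}+4x^{5}+x^{7}$ and from the polynomials $A(K_{3,3};x)$ and $A(P_2\Box C_3;x)$ displayed above; for orders $8$ and $10$ one substitutes $x=1$ into each entry of Tables~\ref{tab:ord8} and~\ref{tab:ord10}.

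It then remains to check that the resulting list of integers has no repetition. Within a fixed order this is a short inspection of one table. For the cross-order comparisons one uses that $A(G;1)$ grows with the order for connected cubic graphs, so the only delicate point is the three disconnected cubic graphs $K_4\cup K_4$ (order $8$) and $K_4\cup K_{3,3}$, $K_4\cup(P_2\Box C_3)$ (order $10$), which have atypically few connected induced subgraphs for their order; an explicit comparison of their three values against the smaller tables closes these cases. This finishes the proof. The main---and essentially only---obstacle is thus purely computational: one must carry out the finite tabulation without arithmetic slip, and in particular confirm that the disconnected order-$10$ graphs do not collide with any graph of order $6$ or $8$.
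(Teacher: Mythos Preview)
Your proposal is correct and follows essentially the same approach as the paper: invoke Theorem~\ref{p:AlliPoly}~ii) to identify $A(G;1)$ with the number of connected induced subgraphs, and then rely on the explicit alliance polynomials listed in the text and in Tables~\ref{tab:ord8} and~\ref{tab:ord10} to verify that the $30$ evaluations at $x=1$ are pairwise distinct. The paper is in fact even terser---it treats the proposition as an immediate corollary of the preceding computations and does not spell out the cross-order checks or the disconnected cases that you carefully flag.
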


Here we have proved that a polynomial cannot be the alliance polynomial of both a cubic and a non-cubic graph.

In \cite{CRST} the authors prove that paths, cycles, complete graphs, complete graphs minus an edge, and stars are all characterized by their alliance polynomials.
Here we have proved that a polynomial cannot be the alliance polynomial of both a cubic and a non-cubic graph.
In fact, cubic graphs of order at most $10$ are characterized by their alliance polynomials (Proposition \ref{p:uniqCub10}).

In \cite[Proposition 4.1]{CRST} the authors compare the distinctive power of the alliance polynomial with other well-known graph polynomials, such as the domination polynomial \cite{AP}, the independence polynomial \cite{GH}, the matching polynomial \cite{F}, the characteristic polynomial, the Tutte polynomial \cite{T}, the bivariate chromatic polynomial \cite{DPT} and the subgraph component polynomial \cite{TAM}.
In fact, this result exhibits for each of these polynomials $p(G;x)$ two graphs $G_1,G_2$ with
$p(G_1;x) = p(G_2;x)$ and $A(G_1;x) \neq A(G_2;x)$.

\section*{Acknowledgements}
This work was partly supported by a grant for Mobility of own research program at the University Carlos III de Madrid, a grant from Ministerio de Econom{\'\i}a y Competitividad (MTM 2013-46374-P) Spain, and a grant from CONACYT \ (CONACYT-UAG I0110/62/10), Mexico.

\

\end{document}